\theoremstyle{plain}
\theoremstyle{definition}
\newtheorem{question}{Question}
\begin{document}
\DOIsuffix{Submitted October 20013}
\Volume{}
\Issue{}
\Month{}
\Year{2014}
\pagespan{1}{}
\Receiveddate{XXXX}
\Reviseddate{XXXX}
\Accepteddate{XXXX}
\Dateposted{XXXX}
\keywords{Ultrafilters, ultrapowers, Dedekind cuts, topological Ramsey spaces, Ramsey ultrafilters, Tukey theory}
\subjclass[msc2010]{03E05, 03E50,  03H15, 05D10}



\title[Dedekind Cuts]{Ramsey for $\mathcal{R}_{1}$ ultrafilter mappings and their Dedekind cuts}


\author[T. Trujillo]{Timothy Trujillo\inst{1}%
  \footnote{Corresponding author\quad E-mail:~\textsf{Timothy.Trujillo@du.edu},
 }}
\address[\inst{1}]{University of Denver, Department of Mathematics, 2360 S Gaylord St, Denver, CO 80208, USA}

\begin{abstract}
Associated to each ultrafilter $\mathcal{U}$ on $\omega$ and each map $p:\omega\rightarrow \omega$ is a Dedekind cut in the ultrapower $\omega^{\omega}/p( \mathcal{U})$. Blass has characterized, under CH, the cuts obtainable when $\mathcal{U}$ is taken to be either a p-point ultrafilter, a weakly-Ramsey ultrafilter or a Ramsey ultrafilter. 

Dobrinen and Todorcevic have introduced the topological Ramsey space $\mathcal{R}_{1}$. Associated to the space $\mathcal{R}_{1}$ is a notion of Ramsey ultrafilter for $\mathcal{R}_{1}$ generalizing the familiar notion of Ramsey ultrafilter on $\omega$.  We characterize, under CH, the cuts obtainable when $\mathcal{U}$ is taken to be a Ramsey for $\mathcal{R}_{1}$ ultrafilter and $p$ is taken to be any map. In particular, we show that the only cut obtainable is the standard cut, whose lower half consists of the collection of equivalence classes of constants maps. 

Forcing with $\mathcal{R}_{1}$ using almost-reduction adjoins an ultrafilter which is Ramsey for $\mathcal{R}_{1}$. For such ultrafilters $\mathcal{U}_{1}$, Dobrinen and Todorcevic have shown that the Rudin-Keisler types of the p-points within the Tukey type of $\mathcal{U}_{1}$ consists of a strictly increasing chain of rapid p-points of order type $\omega$. We show that for any Rudin-Keisler mapping between any two p-points within the Tukey type of $\mathcal{U}_{1}$ the only cut obtainable is the standard cut. These results imply existence
theorems for special kinds of ultrafilters.

\end{abstract}
\maketitle                   

\section{Introduction}
In this section, we define the notion of a Rudin-Keisler mapping and associate to each mapping a Dedekind cut. Then we state some results of Blass in \cite{BlassCut} characterizing, under CH, the types of cuts obtainable for Rudin-Keisler mappings from a p-point or a weakly-Ramsey ultrafilter on $\omega$. In last part of this section, we provide an outline of the rest of the article and highlight its main results.

We remind the reader of the {Rudin-Keisler reducibility relation.} If $\mathcal{U}$ is an ultrafilter on the base set $X$ and $\mathcal{V}$ is an ultrafilter on the base set $Y$, then we say that $\mathcal{V}$ is \emph{Rudin-Keisler reducible to} $\mathcal{U}$ and write $\mathcal{V} \le_{RK} \mathcal{U}$ if there there exists a function $f:X \rightarrow Y$ such that $\mathcal{V} = f( \mathcal{U})$, where
\begin{equation}
f(\mathcal{U}) = \left< \{ f(Z) : Z\in\mathcal{U}\} \right>.
\end{equation}
A \emph{Rudin-Keisler mapping from $\mathcal{U}$ to $\mathcal{V}$} is a function $f:X \rightarrow Y$ such that $\mathcal{V} = f( \mathcal{U})$.

Associated to each ultrafilter $\mathcal{U}$ on $X$ is an equivalence relation on $\omega^{X}$. If $f$ and $g$ are two functions from $X$ to $\omega$ then we say that \emph{$f$ and $g$ are equivalent mod $\mathcal{U}$} if there exists $Z\in \mathcal{U}$ such that $f\upharpoonright Z = g \upharpoonright Z$. The \emph{ultrapower} $\omega^{X}/\mathcal{U}$ is the collection of all equivalence classes with respect to this equivalence. All operations and relations defined on $\omega$ have natural extensions making the ultrapower an elementary extension of the standard model of $\omega$. In particular, $\omega^{X}/\mathcal{U}$ forms a linearly ordered set. ( In this case, $[f]\le[g]$ if and only if $\{x\in X: f(x) \le g(x)\}\in\mathcal{U}$.)

Recall that, a \emph{Dedekind cut of a linearly ordered set} is a partition $(S,L)$ of the linear order such that no element of $L$ precedes any element of $S$. We follow the work of Blass in \cite{BlassCut} and associate to each Rudin-Keisler mapping from $\mathcal{U}$ on $X$ to $\mathcal{V}$ on $Y$ a Dedekind cut in the ultrapower $\omega^{Y}/\mathcal{V}$. A cut $(S,L)$ in the ultrapower is said to be \emph{proper} if $L$ is nonempty and $S$ contains the equivalence class of each constant map. The cut given by taking $S$ to be the set of equivalence classes of constant maps is called \emph{the standard cut}. 
\begin{definition}[ \cite{BlassCut}]
Let ${\cal U}$ be an ultrafilter on the base set $X$ and $p: X \rightarrow Y$. For any $A \subseteq X$, we define \emph{the cardinality function of $A$ relative to $p$} by
\begin{equation}
C_{A}(y) = | A \cap p^{-1}\{y\}|\ \mbox{ for } y\in Y.
\end{equation}
The set of all equivalence classes of cardinality functions of sets in ${\cal U}$, and all larger elements of $\omega^{X} /p( {\cal U})$, constitute the upper part $L$ of a cut $(S,L)$ of $\omega^{X} / p( {\cal U} )$, which we call \emph{the cut associated to $p$ and ${\cal U}$}. (If $C_{A}(n)$ is infinite for some $y$ then $C_{A}\not \in \omega^{Y}$ and has no equivalence class, so it makes no contribution to $L$; it is entirely possible for $L$ to be empty.) 
\end{definition}

 The cut associated to $p$ and ${\cal U}$ is proper if and only if $p$ is finite-to-one but not one-to-one on any set in ${\cal U}$. Additionally, the existence of a proper cut in $\omega^{X}/{\cal U}$ implies that ${\cal U}$ is non-principal. The next three theorems are due to Blass and appear as Theorems 1, 2 and 4 in \cite{BlassCut}. The first theorem shows that certain Dedekind cuts are not obtainable from Rudin-Keisler mappings. In the remaining theorems of this section, $(S,L)$ is assumed to be a proper cut.
\begin{theorem}[ \cite{BlassCut}]
\label{cut-thm1}
$(S,L)$ is the cut associated to some map of some ultrafilter to $\mathcal{ U}$ if and only if $S$ is closed under addition. 
\end{theorem}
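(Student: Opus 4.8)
The plan is to prove the two implications separately, with essentially all of the content in the sufficiency direction.

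\emph{Necessity.} Suppose $(S,L)$ is the cut associated to a map $p\colon X\to Y$ and an ultrafilter $\mathcal W$ on $X$ with $p(\mathcal W)=\mathcal U$. I would establish the contrapositive of ``$S$ is closed under addition'': if $[f],[g]$ are such that $[f+g]\in L$, then $[f]\in L$ or $[g]\in L$. By the definition of the associated cut, $[f+g]\ge[C_A]$ for some $A\in\mathcal W$ whose cardinality function is finite $\mathcal U$-almost everywhere; replacing $A$ by its intersection with $p^{-1}$ of the $\mathcal U$-set on which $C_A(y)\le f(y)+g(y)$, I may assume this inequality holds for every $y$ and that each fibre $A\cap p^{-1}\{y\}$ is finite. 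Now split every fibre, putting $\min(f(y),C_A(y))$ of its points into a set $A_0$ and the rest into $A_1$; then $C_{A_0}\le f$ and $C_{A_1}\le g$ pointwise, and $A=A_0\sqcup A_1\in\mathcal W$. Whichever of $A_0,A_1$ belongs to $\mathcal W$ puts $[C_{A_0}]$ (resp.\ $[C_{A_1}]$) into $L$, hence $[f]\in L$ (resp.\ $[g]\in L$) by upward closure of $L$, which is exactly what was wanted.

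\emph{Sufficiency: the construction.} Assume $S$ is closed under addition; say $\mathcal U$ lives on $Y$, so $(S,L)$ is a proper cut of $\omega^{Y}/\mathcal U$. I would take $X=\omega\times Y$ with $p$ the projection to $Y$, so that $C_A(y)=|\{n:(n,y)\in A\}|$ for $A\subseteq X$, and I want an ultrafilter $\mathcal W\supseteq\mathcal F$, where $\mathcal F$ is the filter generated by the three families $\{\omega\times Z:Z\in\mathcal U\}$, $\{A_h:=\{(n,y):n<h(y)\}:h\colon Y\to\omega,\ [h]\in L\}$, and $\{X\setminus A: C_A \text{ finite a.e.\ and } [C_A]\in S\}$. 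Writing $(S_{\mathcal W},L_{\mathcal W})$ for the cut associated to $p$ and $\mathcal W$, any such $\mathcal W$ does the job: the first family forces $p(\mathcal W)=\mathcal U$; the second forces $L\subseteq L_{\mathcal W}$, since $C_{A_h}=h$; and the third forces that no $A\in\mathcal W$ with finite cardinality function has $[C_A]\in S$, so $L_{\mathcal W}\subseteq L$ by upward closure of $L$; hence $(S_{\mathcal W},L_{\mathcal W})=(S,L)$.

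\emph{The crux.} The main obstacle is checking that $\mathcal F$ has the finite intersection property, and this is the only place the hypothesis enters. A finite intersection of generators simplifies, after combining like terms, to a set $(\omega\times Z)\cap A_h\cap\bigl(X\setminus A'\bigr)$ with $Z\in\mathcal U$, $[h]\in L$ (here one uses that a minimum of finitely many members of $L$ is again in $L$, since $L$ is the upper part of a linear order, and that $L\ne\varnothing$ since the cut is proper), and $A'=A_1\cup\dots\cup A_k$ a finite union of sets from the third family. Closure of $S$ under addition gives $[C_{A'}]\le[C_{A_1}]+\dots+[C_{A_k}]\in S$, hence $[C_{A'}]<[h]$; choosing $y_0\in Z$ with $C_{A'}(y_0)<h(y_0)$ (the set of such $y$ is in $\mathcal U$, so it meets $Z$), the fibre $A'\cap p^{-1}\{y_0\}$ has fewer than $h(y_0)$ points, so some $n_0<h(y_0)$ satisfies $(n_0,y_0)\notin A'$, and $(n_0,y_0)$ lies in the intersection. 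Getting this bookkeeping right — so that ``$S$ closed under addition'' is invoked precisely where the union $A'$ of the complemented sets could otherwise engulf all of $\{0,\dots,h(y)-1\}$ — is the heart of the argument; everything else is routine.
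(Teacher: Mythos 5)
Your proposal is correct; note, though, that the paper itself gives no proof of this statement --- it is quoted as Theorem 1 of Blass's paper --- so the comparison can only be with the classical argument, and what you have written is essentially that argument. Both directions check out: for necessity, the fibrewise splitting of $A$ (after shrinking $A$ to $A\cap p^{-1}\{y: C_A(y)\le f(y)+g(y)\}$, which stays in $\mathcal W$ because $p(\mathcal W)=\mathcal U$) into pieces with $C_{A_0}\le f$ and $C_{A_1}\le g$ does exactly what is needed, since whichever piece lands in $\mathcal W$ witnesses $[f]\in L$ or $[g]\in L$; for sufficiency, the filter on $\omega\times Y$ generated by $\{\omega\times Z: Z\in\mathcal U\}$, the ``boxes'' $A_h$ for $[h]\in L$, and the complements of sets whose cardinality-function class lies in $S$ pins the associated cut to exactly $(S,L)$, and your finite-intersection argument is where closure under addition (plus properness, to guarantee $L\ne\emptyset$ and that constants lie in $S$) is genuinely used, via $[C_{A_1\cup\dots\cup A_k}]\le[C_{A_1}]+\dots+[C_{A_k}]\in S<[h]$. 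Two cosmetic points: in the third generating family you should say ``$C_A$ finite everywhere'' rather than ``finite a.e.'' to match the paper's convention that $C_A$ only has an equivalence class when it is everywhere finite (this costs nothing, since that restricted family already suffices for $L_{\mathcal W}\subseteq L$, and in the FIP computation one intersects with the $\mathcal U$-set where $C_{A'}$ is finite anyway); and you should state explicitly that $(S,L)$ is assumed proper, which is the standing hypothesis in this part of the paper and is what you invoke when $s=0$ or $k=0$ in the bookkeeping.
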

 Before stating the next two theorems we remind the reader of the definitions of some special types of ultrafilters on $\omega$.
\begin{definition}Let $\mathcal{U}$ be an ultrafilter on $\omega$.
\begin{enumerate}
 \item $\mathcal{U}$ is a \emph{p-point ultrafilter}, if for each sequence $A_{0}\supseteq A_{1} \supseteq A_{2}\supseteq \cdots$ of members of $\mathcal{U}$ there exists $A \in \mathcal{U}$ such that for each $i<\omega$, $A \subseteq^{*} A_{i}$. (Here $\subseteq^{*}$ denotes the almost-inclusion relation.)

\item $\mathcal{U}$ is a \emph{weakly-Ramsey ultrafilter}, if for each partition of the two-element subsets of $\omega$ into three parts there exists an element of $\mathcal{U}$ all of whose two-element subsets lie in two parts of the partition. 

\item $\mathcal{U}$ is a \emph{Ramsey ultrafilter}, if for each partition of the two-element subsets of $\omega$ into two parts there exists an element of $\mathcal{U}$ all of whose two-element subsets lie in exactly one part of the partition. 
\end{enumerate}
\end{definition}
\begin{remark}
It clear that every Ramsey ultrafilter is weakly-Ramsey. In Theorem 5 of \cite{BlassCut}, Blass has shown that every weakly-Ramsey ultrafilter is a p-point.
\end{remark}
\begin{theorem}[ \cite{BlassCut}]
\label{cut-thm2}
Assume CH. $(S,L)$ is the cut associated to some map of some p-point ultrafilter to ${\cal U}$ if and only if ${\cal U}$ is a p-point, $S$ is closed under addition, and every countable subset of $L$ has a lower bound in $L$.
\end{theorem}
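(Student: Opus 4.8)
The plan is to prove the two implications separately: the forward direction is short and uses only familiar facts, while the converse is a transfinite construction of length $\omega_1$ under CH.

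For the forward direction, suppose $(S,L)$ is the cut associated to a map $p$ of a p-point $\mathcal{W}$ on $X$ onto $\mathcal{U}$. That $\mathcal{U}=p(\mathcal{W})$ is a p-point is the standard fact that Rudin--Keisler images of p-points are p-points: pull back a decreasing sequence in $\mathcal{U}$, take a pseudo-intersection in $\mathcal{W}$, and push it forward, using that $p$ of a finite set is finite. Closure of $S$ under addition is immediate from Theorem~\ref{cut-thm1}. For the remaining clause, given $\{[g_n]:n<\omega\}\subseteq L$ I would choose $A_n\in\mathcal{W}$ with $[C_{A_n}]\le[g_n]$ (possible by the definition of $L$), replace $A_n$ by $A_0\cap\cdots\cap A_n$ so that the sequence is decreasing (this only decreases the cardinality functions), and take $A\in\mathcal{W}$ with $A\subseteq^{*}A_n$ for all $n$. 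Since $C_A(y)\le C_{A_n}(y)$ except for the finitely many $y\in p[A\setminus A_n]$, one gets $[C_A]\le[C_{A_n}]\le[g_n]$ in $\omega^{X}/\mathcal{U}$, and $[C_A]\in L$ because $A\in\mathcal{W}$; so $[C_A]$ is the required lower bound in $L$.

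For the converse, assume CH, fix a p-point $\mathcal{U}$ and a proper cut $(S,L)$ with $S$ closed under addition and every countable subset of $L$ bounded below in $L$. I would fix a representative $h\ge 1$ of some member of $L$, set $I=\{(m,k):k<h(m)\}$ with $p:I\to\omega$ the first projection --- so the fibres of $p$ are finite and $C_I=h$ --- and build an increasing $\omega_1$-chain of countably generated filters $\mathcal{W}_\alpha$ on $I$, each extending $\{p^{-1}[B]:B\in\mathcal{U}\}$ and subject to the invariant that every member $M$ has $p[M]\in\mathcal{U}$ and $[C_M]\in L$. (Since the generators at any stage can be replaced by a $\subseteq$-decreasing sequence $A_0\supseteq A_1\supseteq\cdots$ and $[C_{A_n\cap p^{-1}[B]}]=[C_{A_n}]$, the invariant is equivalent to asking it of such a generating sequence; every limit below $\omega_1$ has cofinality $\omega$, so $\mathcal{W}_\lambda=\bigcup_{\alpha<\lambda}\mathcal{W}_\alpha$ is again countably generated.) With a length-$\omega_1$ bookkeeping I would arrange that $\mathcal{W}:=\bigcup_\alpha\mathcal{W}_\alpha$ decides every subset of $I$, is a p-point, and contains for every $g$ with $[g]\in L$ some $A$ with $[C_A]\le[g]$. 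Then $\{p^{-1}[B]:B\in\mathcal{U}\}\subseteq\mathcal{W}_0$ forces $p(\mathcal{W})=\mathcal{U}$, properness of $L$ forces $p$ to be injective on no member of $\mathcal{W}$, and the invariant together with the last clause makes $\{[C_A]:A\in\mathcal{W}\}$ exactly coinitial in $L$, so the cut associated to $p$ and $\mathcal{W}$ is precisely $(S,L)$.

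The two nontrivial steps are the ultrafilter step and the pseudo-intersection step. To decide $X\subseteq I$ with current $\subseteq$-decreasing generators $(A_n)$, I would consider $P=\{n:[C_{A_n\cap X}]\in L\}$ and $P'=\{n:[C_{A_n\cap X^{c}}]\in L\}$: from $C_{A_n}=C_{A_n\cap X}+C_{A_n\cap X^{c}}$ and closure of $S$ under addition we get $P\cup P'=\omega$; as $(A_n)$ is decreasing, $P$ and $P'$ are initial segments of $\omega$, so one of them --- say $P$ --- equals $\omega$, and properness of $L$ gives $p[A_n\cap X]\in\mathcal{U}$ for $n\in P$ (since $[C_{A_n\cap X}]\ge[1]$); hence replacing each $A_n$ by $A_n\cap X$ adds $X$ and preserves both invariants (symmetrically for $X^{c}$). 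To meet a coinitiality or p-point requirement for a countable family already placed into $\mathcal{W}$, after first replacing it by a $\subseteq$-decreasing sequence $(E_j)$ inside the relevant $\mathcal{W}_\alpha$ I would apply the hypothesis to the countable subset $\{[C_{A_n}]:n<\omega\}\cup\{[C_{E_j}]:j<\omega\}\cup\{[g]\}$ of $L$ (with $g$ the target in the coinitiality case) to get a lower bound $[\ell]\in L$, pass to a set $W\in\mathcal{U}$ on which $\ell\le C_{A_n}$ for all small $n$ and each $E_j$ contains $A_j$ on the surviving columns, thin each $A_n\cap p^{-1}\{m\}$ --- coherently in $n$ --- to a set $F_m$ of size $\ell(m)$ lying inside $A_n\cap p^{-1}\{m\}$ for every $n$ with $m$ still active, and put $A^{*}=\bigcup_{m}F_m$, so that $C_{A^{*}}=\ell$ on a set in $\mathcal{U}$; then, using that $\mathcal{U}$ is a p-point, pick $B^{*}\in\mathcal{U}$ a pseudo-intersection of the countably many $\mathcal{U}$-sets involved, and add $A^{*}\cap p^{-1}[B^{*}]$. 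Because the fibres of $p$ are finite, $p^{-1}[B^{*}]\subseteq^{*}p^{-1}[p[A_n]]$, so $A^{*}\cap p^{-1}[B^{*}]$ is $\subseteq^{*}A_n$ and $\subseteq^{*}E_j$ for all $n,j$, and its cardinality function is still $[\ell]\in L$; one such step thus both diagonalizes a decreasing sequence of $\mathcal{W}$ and, when $[\ell]\le[g]$, inserts a set of cardinality $\le[g]$. I expect the main obstacle to be exactly the coordination inside this last step --- thinning the fibres so that the result is a pseudo-intersection of the vertical sequence while the $\mathcal{U}$-pseudo-intersection $B^{*}$ handles the horizontal direction, all without ever leaving the class $L$ --- together with the bookkeeping needed so that every decreasing sequence of the final $\mathcal{W}$ is genuinely caught; this is where the countable-lower-bound property of $L$, the p-point property of $\mathcal{U}$, and the finiteness of the fibres of $p$ are each used.
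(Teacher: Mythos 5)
This theorem is quoted from Blass \cite{BlassCut}; the paper supplies no proof of its own, and your proposal is in substance a correct reconstruction of Blass's original argument --- necessity via Theorem~\ref{cut-thm1} and a pseudo-intersection of witnesses $A_n$ with $[C_{A_n}]\le[g_n]$, sufficiency via a CH-length induction on the region under the graph of a representative of $L$, deciding sets with the additivity of $S$ and meeting the p-point and coinitiality requirements with the countable-lower-bound property of $L$ together with the p-point property of $\mathcal{U}$. The one local slip is the clause ``$p^{-1}[B^{*}]\subseteq^{*}p^{-1}[p[A_{n}]]$, so $A^{*}\cap p^{-1}[B^{*}]\subseteq^{*}A_{n}$'': the almost-inclusion in $A_{n}$ really comes from having chosen $F_{m}\subseteq A_{n}\cap p^{-1}\{m\}$ for all $m$ in a cofinite subset of $B^{*}$ (i.e.\ from $B^{*}\subseteq^{*}\{m:\ell(m)\le C_{A_{n}}(m)\}$), not from that containment --- but the construction as you describe it does secure exactly this, so the argument stands.
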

\begin{theorem}[\cite{BlassCut}]
\label{cut-thm4}
Assume CH. $(S,L)$ is the cut associated to some map of some weakly Ramsey ultrafilter to ${\cal U}$ if and only if ${\cal U}$ is Ramsey, $S$ is closed under exponentiation, and every countable subset of $L$ has a lower bound in $L$.  
\end{theorem}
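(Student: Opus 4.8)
The plan is to prove the two implications separately. The necessity direction is mostly soft, the one substantive point being the closure of $S$ under exponentiation; the sufficiency direction is a transfinite construction under CH modelled on the proof of Theorem~\ref{cut-thm2}. \emph{Necessity.} Suppose $p$ is a Rudin--Keisler mapping of a weakly Ramsey ultrafilter $\mathcal{V}$ onto $\mathcal{U}$ and the associated cut $(S,L)$ is proper, so that $p$ is finite-to-one but one-to-one on no member of $\mathcal{V}$; write $F_y=p^{-1}\{y\}$ for the finite fibres. That $\mathcal{U}$ is Ramsey follows by a three-colour trick: given a $2$-colouring $\chi$ of the pairs of the base set of $\mathcal{U}$, colour a pair $\{x,x'\}$ of the base of $\mathcal{V}$ by $\chi\{p(x),p(x')\}$ when $p(x)\ne p(x')$ and by a third colour otherwise; a set $A\in\mathcal{V}$ whose pairs use at most two of the three colours must keep the ``same-fibre'' colour (as $p$ is not injective on $A$), so its pairs across fibres use a single colour and $p\,[A]\in\mathcal{U}$ is $\chi$-homogeneous. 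That every countable subset of $L$ has a lower bound in $L$ is immediate from $\mathcal{V}$ being a p-point: for $\ell_n\in L$ pick $A_n\in\mathcal{V}$ with $[C_{A_n}]\le\ell_n$, let $A\in\mathcal{V}$ be a pseudo-intersection, and note that $A\setminus A_n$ meets only finitely many fibres, so $[C_A]\le[C_{A_n}]\le\ell_n$ with $[C_A]\in L$.

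The main obstacle of this direction is that $S$ is closed under exponentiation. First one checks closure under multiplication: if $[f],[g]\in S$ but $[fg]\in L$, fix $A\in\mathcal{V}$ with $C_A(y)\le f(y)g(y)$ for $\mathcal{U}$-almost all $y$, partition each $A\cap F_y$ into $f(y)$ blocks of size $\le g(y)$, and $3$-colour the pairs of $A$ by ``same block'', ``same fibre, different blocks'', ``different fibres''; a weakly Ramsey $B\subseteq A$ in $\mathcal{V}$ retains the third colour, hence meets each fibre in one block or in at most one point per block, so $C_B(y)\le f(y)+g(y)$, and $[C_B]\le[f]+[g]\in S$ by Theorem~\ref{cut-thm1} contradicts $[C_B]\in L$. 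For exponentiation, suppose $[f],[g]\in S$ (we may take $f,g\ge 2$ on a set in $\mathcal{U}$, the remaining cases being trivial) but $[f^g]\in L$, and fix $A\in\mathcal{V}$ with $C_A(y)\le f(y)^{g(y)}$ for $\mathcal{U}$-almost all $y$. On each fibre place a Ramsey-optimal $2$-colouring of the pairs of $A\cap F_y$ --- one with no monochromatic subset of size exceeding $r(y)=O\bigl(g(y)\log f(y)\bigr)$, which exists by the Erd\H{o}s lower bound for $R(k,k)$ since $|A\cap F_y|\le f(y)^{g(y)}$ --- and $3$-colour the pairs of $A$ by this fibrewise $2$-colouring together with a third colour for cross-fibre pairs. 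A weakly Ramsey $B\subseteq A$ in $\mathcal{V}$ keeps the third colour, so is monochromatic on each fibre and $C_B(y)\le r(y)$ for $\mathcal{U}$-almost all $y$; but $[r]$ is bounded by a sum and product of $[g]$, $[\lceil\log_2 f\rceil]$ and constants, hence lies in $S$ by closure of $S$ under multiplication and addition (note $[\lceil\log_2 f\rceil]\le[f]\in S$ and $S$ is downward closed), contradicting $[C_B]\in L$. Thus $[f^g]\in S$.

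\emph{Sufficiency.} Assume CH together with the three stated conditions, and take $\mathcal{U}$ on $\omega$. The plan is to build $\mathcal{V}$ on $D=\omega\times\omega$ with $p$ the first projection, as the ultrafilter generated by a $\subseteq^{*}$-decreasing chain $\langle A_\alpha:\alpha<\omega_1\rangle$ of subsets of $D$ obeying the invariant $[C_{A_\alpha}]\in L$ and $p\,[A_\alpha]\in\mathcal{U}$. Since $S$ is closed under exponentiation, $L$ has no least element and is closed under ``halving'' ($[h]\in L\Rightarrow[\lceil h/2\rceil]\in L$) and, more generally, under bounded-depth iterated logarithm; with the hypothesis on countable subsets of $L$ this makes $L$ coinitial of length $\omega_1$ under CH. By CH enumerate in type $\omega_1$ the tasks, handled at successor stages: for $B\subseteq\omega$, put $p^{-1}(B)$ or $p^{-1}(\omega\setminus B)$ below the chain according to which lies in $\mathcal{U}$ (forcing $p(\mathcal{V})=\mathcal{U}$); for $E\subseteq D$, thin $A_\alpha$ into $E$ or into $D\setminus E$ by keeping, fibrewise, the larger half, choosing which with the $\mathcal{U}$-ultrafilter (making $\mathcal{V}$ an ultrafilter, the invariant surviving by closure of $L$ under halving); and, along a coinitial sequence $\langle\ell_\xi:\xi<\omega_1\rangle$ of $L$ with $[h_\xi]=\ell_\xi$, replace $A_\alpha$ by the set keeping at most $h_\xi(i)$ points in each fibre $F_i$, so that $[C_{A_{\alpha+1}}]=\min([C_{A_\alpha}],\ell_\xi)\le\ell_\xi$ (forcing the upper part of the cut to be coinitial in $L$, hence all of $L$). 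At limit stages of cofinality $\omega$ one takes a pseudo-intersection, and the hypothesis that countable subsets of $L$ have lower bounds in $L$ is exactly what keeps $[C_{A_\alpha}]\in L$ there. The bookkeeping maintaining the invariant through these steps is as in the proof of Theorem~\ref{cut-thm2}.

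The one genuinely new task, and the main obstacle of this direction, is to thin a given $A_\alpha$ with $[C_{A_\alpha}]\in L$ to a member of $\mathcal{V}$ whose pairs use at most two of a given three colours, without losing the invariant. Here one applies the finite Ramsey theorem inside each finite fibre to extract a monochromatic subset, uses the selectivity of $\mathcal{U}$ both to align the fibrewise colours and to globalise a product-Ramsey argument that tames the colours of cross-fibre pairs, and must organise this so that each fibre is thinned only to a bounded-depth iterated logarithm of its original size --- which the selectivity of $\mathcal{U}$ makes possible and which keeps $[C_{A_\alpha}]$ in $L$ precisely because $S$ is closed under exponentiation. The quantitative calibration of this finite combinatorics against the closure of $S$ under exponentiation is the crux of the whole theorem, and it is here that the Ramsey property of $\mathcal{U}$, rather than merely its being a p-point (cf.\ Theorem~\ref{cut-thm2}), is used. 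Finally one reads off from the construction that $\mathcal{V}$ is weakly Ramsey, that $p(\mathcal{V})=\mathcal{U}$, and --- from the invariant together with the coinitial-sequence step --- that the cut associated to $p$ and $\mathcal{V}$ is exactly $(S,L)$.
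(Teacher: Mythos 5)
This statement is quoted from Blass's paper and is not proved in the manuscript under review, so there is no in-paper proof to compare against; I evaluate your argument against what such a proof must contain. Your necessity direction is essentially complete and correct: the three-colour lift showing the image ultrafilter is Ramsey, the p-point/pseudo-intersection argument for lower bounds of countable subsets of $L$, and the two-stage closure argument (multiplication via a block partition of each fibre, exponentiation via an Erd\H{o}s-type $2$-colouring with small homogeneous sets placed on each fibre together with a third colour for cross-fibre pairs) are all sound, and are in substance the arguments Blass gives.

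The gap is in the sufficiency direction, at precisely the step you yourself label ``the crux of the whole theorem'': the lemma that a set $A$ with $[C_{A}]\in L$ and $p''A\in\mathcal{U}$ can be thinned, for any partition of its pairs into three cells, to a $B$ with $[C_{B}]\in L$, $p''B\in\mathcal{U}$, and all pairs in at most two cells. You assert that this can be organised so that ``each fibre is thinned only to a bounded-depth iterated logarithm of its original size,'' but you give no argument, and the assertion is not obviously achievable. The within-fibre homogenisation indeed costs one logarithm per fibre, which is harmless. The cross-fibre homogenisation, however, done in the natural way (end-homogenising the $l$-th kept fibre against the union of the previously kept fibres), costs a division by $3^{m}$ where $m$ is the total size of the previously kept fibres; since those sizes must themselves represent elements of $L$, $m$ is not controlled by anything in $S$, and this single step already destroys the invariant $[C_{B}]\in L$. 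Closure of $S$ under exponentiation gives closure of $L$ under any \emph{fixed} finite number of logarithms, but not under an unboundedly iterated one: for $S=\bigcup_{n<\omega}\{[g]:[g]\le {}^{n}[f]\}$ one has $[\,y\mapsto{}^{y}f(y)\,]\in L$ while its $y$-fold iterated base-$f$ logarithm is standard, hence in $S$. The entire content of Blass's sufficiency proof is the calibration you defer: using the selectivity of $\mathcal{U}$ to choose a sparse set of fibres and a compatible schedule of target sizes so that each fibre is large enough to absorb both the fibrewise Ramsey cost and the exponential cost of compatibility with all of its predecessors, while the resulting cardinality function remains in $L$. Without that lemma the sufficiency half is an outline, not a proof.
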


Blass has remarked in \cite{BlassCut} that many of the ultrafilter-theoretic concepts involved in the previous theorems have natural model-theoretic interpretations in terms of ultrapowers. Following Blass, we consider models that are elementary extensions of the standard model whose universe is $\omega$ and whose relations and functions are all the relations and functions on $\omega$. Suppose that $N$ is such a model. An element $x\in N$ is said to \emph{generate $N$ over the submodel $M$} if and only if no proper submodel of $N$ includes $M \cup \{x\}$.

In \cite{BlassCut}, Blass notes that, if $f:X \rightarrow Y$ and $\mathcal{U}$ is an ultrafilter on $X$, then
\begin{equation}
f^{*}: \omega^{Y}/ f(\mathcal{U}) \rightarrow \omega^{X}/ \mathcal{U}
\end{equation}
is an elementary embedding. Furthermore, $f$ is an isomorphism of ultrafilters if and only if $f^{*}$ is an isomorphism of models. The image of $f^{*}$ is cofinal in $\omega^{Y}/\mathcal{U}$ if and only if $f$ is finite-to-one on some set of $\mathcal{U}$. Hence, $\mathcal{U}$ is a p-point (Ramsey ultrafilter) if and only if every nonstandard submodel of $\omega^{X}/\mathcal{V}$ is cofinal in ( equal to ) $\omega^{X}/\mathcal{U}$. An element $x \in \omega^{Y}/f(\mathcal{V})$ is in the upper half of the cut associated to $f$ and $\mathcal{U}$ if and only if $f^{*}(x)$ is greater than some generator of $\omega^{X}/\mathcal{U}$ over $f^{*}( \omega^{Y}/f(\mathcal{U}))$.

\begin{remark}\label{IsomorphismRemark}
Let ${\cal U}$ be an ultrafilter on the base set $X$ and $p: X \rightarrow Y$. Suppose that $g:W \rightarrow X$ is a bijection and $\mathcal{W}$ is an ultrafilter on $W$ such that $g(\mathcal{W})= \mathcal{U}$. Then the cut associated to $p \circ g$ and $\mathcal{W}$ is exactly the cut associated to $p$ and $\mathcal{U}$. Additionally, suppose that $h: Y \rightarrow Z$ is a bijection. Since $h^{*}$ is an isomorphism of models, it follows that if $(S,L)$ is the cut associated to $h \circ p$ and $\mathcal{U}$ then $(h^{*\prime \prime}S, h^{* \prime \prime}L)$ is the cut associated to $p$ and $\mathcal{U}$. In particular, if $(S,L)$ is the standard cut in $\omega^{\omega}$ then $(h^{*\prime \prime}S, h^{*\prime \prime}L)$ is the standard cut in $\omega^{\omega}/ p(\mathcal{U})$.
\end{remark}

The purpose of this paper is to prove analogous results for ultrafilters satisfying similar properties. In Section $\ref{section2}$ we introduce the setting for the main results of this article, namely the topological Ramsey space $\mathcal{R}_{1}$. In Section $\ref{section3}$ we introduce the generalization of the notion Ramsey ultrafilter we study in later sections. In Section $\ref{section4}$, we characterize the cuts obtainable, under CH, from the generalization of Ramsey we defined in Section $\ref{section3}$. The next theorem which we prove in Section $\ref{section4}$ is one of the two main results of this manuscript.
\begin{theorem}\label{MainResult1}
Assume CH. $(S,L)$ is the cut associated to some map of some Ramsey for $\mathcal{R}_{1}$ ultrafilter on $[T_{1}]$ to ${\cal V}$ if and only if ${\cal V}$ is selective and $(S,L)$ is the standard cut in $\omega^{\omega}/{\cal V}$.
\end{theorem}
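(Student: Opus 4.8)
The plan is to isolate a single computation---call it the \emph{Core Claim}---and derive both directions from it. \emph{Core Claim:} if $\mathcal{U}_{1}$ is any Ramsey for $\mathcal{R}_{1}$ ultrafilter on $[T_{1}]$ and $\pi\colon[T_{1}]\to\omega$ is the canonical projection sending a maximal node to the index of its first-level predecessor, then $\pi(\mathcal{U}_{1})$ is selective and the cut associated to $\pi$ and $\mathcal{U}_{1}$ is the standard cut in $\omega^{\omega}/\pi(\mathcal{U}_{1})$. (Selectivity of $\pi(\mathcal{U}_{1})$ is part of the analysis of Sections~\ref{section2}--\ref{section3} and I use it freely.) The relevant structural fact about $\mathcal{R}_{1}$ is that a member $T$ carries a fixed, strictly increasing sequence of finite fan-sizes $(N_{i})_{i<\omega}$---its $i$-th first-level node has exactly $N_{i}$ successors---so that whenever $\pi[A_{T}]=\{m_{0}<m_{1}<\cdots\}$ one has $C_{A_{T}}(m_{i})=N_{i}$, and $N_{i}\to\infty$.

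To prove the Core Claim, first note that for every $A\in\mathcal{U}_{1}$ the function $C_{A}$ is finite-valued (the fiber over $y$ has at most $N_{y}$ points), so it contributes to $L$; choosing $T\in\mathcal{U}_{1}\cap\mathcal{R}_{1}$ with $A_{T}\subseteq A$ gives $C_{A}\ge C_{A_{T}}$, and since $N_{i}\to\infty$ the set $\{y\in\pi[A_{T}]:C_{A_{T}}(y)>m\}$ is cofinite in $\pi[A_{T}]\in\pi(\mathcal{U}_{1})$ for each $m$, whence $[C_{A}]>[m]$. Thus $L$ lies in the nonstandard part of $\omega^{\omega}/\pi(\mathcal{U}_{1})$, so $S$ contains all the constants. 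For the reverse inclusion let $g\in\omega^{\omega}$ be nonstandard. The sets $A_{m}=\{n:g(n)\ge N_{m}\}$ lie in $\pi(\mathcal{U}_{1})$, decrease, and have empty intersection, so selectivity of $\pi(\mathcal{U}_{1})$ produces $M=\{m_{0}<m_{1}<\cdots\}\in\pi(\mathcal{U}_{1})$ with $g(m_{i})\ge N_{i}$ for all $i$. Since $\pi$ is onto, $\pi^{-1}(M)\in\mathcal{U}_{1}$ (otherwise $\omega\setminus M=\pi[\pi^{-1}(\omega\setminus M)]\in\pi(\mathcal{U}_{1})$, contrary to $M\in\pi(\mathcal{U}_{1})$), so by the basic density of $\mathcal{U}_{1}\cap\mathcal{R}_{1}$ there is $T\in\mathcal{U}_{1}\cap\mathcal{R}_{1}$ with $A_{T}\subseteq\pi^{-1}(M)$; for $n\in\pi[A_{T}]\subseteq M$, letting $i$ and $j$ be the ranks of $n$ in $M$ and in $\pi[A_{T}]$ (so $j\le i$), we get $C_{A_{T}}(n)=N_{j}\le N_{i}\le g(m_{i})=g(n)$, hence $[C_{A_{T}}]\le[g]$ and $[g]\in L$. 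This forces $(S,L)$ to be the standard cut. I expect this last step---pushing the cardinality functions below an arbitrary nonstandard $g$ by combining selectivity of $\pi(\mathcal{U}_{1})$ with the observation $\pi^{-1}(M)\in\mathcal{U}_{1}$ for $M\in\pi(\mathcal{U}_{1})$---to be the technical heart of the theorem.

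For the forward direction, let $\mathcal{U}_{1}$ be Ramsey for $\mathcal{R}_{1}$, let $p\colon[T_{1}]\to\omega$, put $\mathcal{V}=p(\mathcal{U}_{1})$, and take $(S,L)$ to be the associated proper cut, so $p$ is finite-to-one on a member of $\mathcal{U}_{1}$ but one-to-one on none (the remaining, non-proper cases are dispatched directly by the classification below). Apply the Ramsey classification theorem for $\mathcal{R}_{1}$ to the equivalence relation $x\sim x'\iff p(x)=p(x')$ to get $B\in\mathcal{U}_{1}\cap\mathcal{R}_{1}$ on which $\sim$ is one of the finitely many canonical equivalence relations; properness rules out the trivial one ($p$ would be constant on $B\in\mathcal{U}_{1}$, so not finite-to-one on any member of $\mathcal{U}_{1}$) and the discrete one ($p$ would be one-to-one on $B$), leaving $p\upharpoonright B=\iota\circ(\pi\upharpoonright B)$ for a bijection $\iota$ onto $p[B]$. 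Then $\mathcal{V}=\iota(\pi(\mathcal{U}_{1}))$ is selective, and since $p$ and $p\upharpoonright B$ induce the same cut, the Core Claim together with Remark~\ref{IsomorphismRemark} applied to $\iota$ gives that $(S,L)$ is the standard cut in $\omega^{\omega}/\mathcal{V}$. The delicate point here is having in hand the precise list of canonical equivalence relations for $\mathcal{R}_{1}$ and confirming that the only finite-to-one, non-one-to-one one is, up to a bijection of its range, the first-level projection.

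For the reverse direction, assume CH, fix a selective $\mathcal{V}$ (which we may take to be on $\omega$), and let $(S,L)$ be the standard cut in $\omega^{\omega}/\mathcal{V}$, which is proper since $\mathcal{V}$ is nonprincipal. Enumerate $\mathcal{V}=\{V_{\alpha}:\alpha<\omega_{1}\}$ and, in order type $\omega_{1}$, the requirements needed to force the Ramsey property for $\mathcal{R}_{1}$, and build a $\le$-decreasing sequence $T_{0}\ge T_{1}\ge\cdots$ of members of $\mathcal{R}_{1}$ (with fusions at limits, via the topological Ramsey space axioms) meeting the $\alpha$-th Ramsey requirement at even stages---using the abstract Ellentuck theorem for $\mathcal{R}_{1}$ to decide it below the current condition---and shrinking to a member of $\mathcal{R}_{1}$ with all maximal nodes in $\pi^{-1}(V_{\alpha})$ at odd stages. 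The latter move is legitimate because $V_{\alpha}$ is infinite and the restriction of any member of $\mathcal{R}_{1}$ to the first-level nodes indexed by an infinite set is, after re-indexing, again a member of $\mathcal{R}_{1}$; this compatibility is what lets the two families of requirements be interleaved. Then $\mathcal{U}_{1}=\{A\subseteq[T_{1}]:A_{T_{\alpha}}\subseteq A\text{ for some }\alpha\}$ is Ramsey for $\mathcal{R}_{1}$ with $\pi(\mathcal{U}_{1})=\mathcal{V}$, so by the Core Claim the cut associated to $\pi$ and $\mathcal{U}_{1}$ is the standard cut in $\omega^{\omega}/\mathcal{V}$, which is $(S,L)$; this exhibits $(S,L)$ as the cut associated to a map of a Ramsey for $\mathcal{R}_{1}$ ultrafilter to $\mathcal{V}$.
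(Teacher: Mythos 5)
Your Core Claim and forward direction are sound and, despite the different packaging, close in spirit to the paper's Lemma \ref{CUT1}: the paper does not invoke the full Ramsey-classification theorem but canonizes $p$ by hand, partitioning $\mathcal{AR}_{2}$ into three pieces according to how $p$ compares the three maximal nodes of a $2$-approximation; homogeneity forces exactly your trichotomy (essentially constant, one-to-one, or factoring through $\pi$), and your subsequent diagonalization of the sets $\{n:g(n)\ge N_{m}\}$ is the same argument as the paper's diagonalization of $X_{n}=\{x\in[S']:f(p(x))\ge n\}$. One caveat: the classification theorem cited from \cite{Ramsey-Class} produces a canonizing $B\le A$ somewhere in $\mathcal{R}_{1}$, and to get $B$ inside an \emph{arbitrary} Ramsey for $\mathcal{R}_{1}$ ultrafilter (not just a generic one) you should reduce the canonization to finitely many two-part partitions of $\mathcal{AR}_{2}$, which is exactly what the paper's hand-crafted partition accomplishes.

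The genuine gap is in your reverse direction. The construction must maintain at every stage $\gamma$ the invariant that $T_{\gamma}$ is \emph{large}, i.e.\ $\pi''[T_{\gamma}]\in\mathcal{V}$: without it the odd-stage move of shrinking into $\pi^{-1}(V_{\alpha})$ can become impossible (if $\pi''[T_{\gamma}]\cap V_{\alpha}$ is finite there is no member of $\mathcal{R}_{1}$ below $T_{\gamma}$ contained in $\pi^{-1}(V_{\alpha})$), and without it one cannot conclude $\pi(\mathcal{U}_{1})=\mathcal{V}$. But both tools you use destroy this invariant. The abstract Ellentuck theorem hands you \emph{some} homogeneous $T_{\gamma+1}\le T_{\gamma}$ whose projection is merely an infinite subset of $\pi''[T_{\gamma}]$, with no reason to lie in $\mathcal{V}$; the paper instead applies the finite Ramsey theorem for $\mathcal{R}_{1}$ inside each block of the current condition and then uses that $\mathcal{V}$ is an ultrafilter to extract a $\mathcal{V}$-large set of levels carrying a single colour. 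Likewise, a generic fusion at a countable limit gives $T_{\beta}\le^{*}T_{\alpha_{n}}$ with $\pi''[T_{\beta}]$ only a pseudo-intersection of the sets $\pi''[T_{\alpha_{n}}]$, which need not belong to $\mathcal{V}$; the paper's limit step uses the selectivity of $\mathcal{V}$ to diagonalize while keeping the projection in $\mathcal{V}$ and the blocks growing fast enough to re-index into $\mathcal{R}_{1}$. The tell-tale sign is that your construction never uses the hypothesis that $\mathcal{V}$ is selective, yet that hypothesis is necessary (by Theorem \ref{selectThm}, the $\pi$-image of any Ramsey for $\mathcal{R}_{1}$ ultrafilter is selective), so an argument that would apparently succeed for arbitrary $\mathcal{V}$ cannot be correct as written.
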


 In Section $\ref{section5},$ we introduce the basic definitions associated with the Tukey theory of ultrafilters. Applying theorems of Dobrinen and Todorcevic in \cite{Ramsey-Class} about ultrafilters generated from generic subsets of $(\mathcal{R}_{1}, \le^{*})$ we prove the second main result of this paper. 
\begin{theorem}\label{MainResult2}
Suppose $\mathcal{U}_{1}$ is a Ramsey for $\mathcal{R}_{1}$ ultrafilter on $T_{1}$ generated by a generic subset of $(\mathcal{R}_{1}, \le^{*})$. If $(S,L)$ is the cut associated to some map from some p-point ultrafilter in the Tukey type of $\mathcal{U}_{1}$ to some ultrafilter $\mathcal{V}$ then $\mathcal{V}$ is a p-point ultrafilter and $(S,L)$ is the standard cut.
\end{theorem}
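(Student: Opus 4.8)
The plan is to obtain the first conclusion from the fact that Rudin--Keisler images of p-points are p-points, and the second from the Dobrinen--Todorcevic result (\cite{Ramsey-Class}) that every p-point in the Tukey type of $\mathcal{U}_{1}$ is a \emph{rapid} p-point, together with a self-contained combinatorial argument showing that a rapid p-point admits only the standard cut.

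Let $\mathcal{W}$ denote the given p-point in the Tukey type of $\mathcal{U}_{1}$, so that $\mathcal{V}=p(\mathcal{W})$. Since $(S,L)$ is a proper cut (the standing hypothesis on cuts), $p$ is finite-to-one on some set of $\mathcal{W}$; restricting to that set and transporting through a bijection with $\omega$ (Remark~\ref{IsomorphismRemark}), I may assume $\mathcal{W}$ is an ultrafilter on $\omega$ and $p\colon\omega\to\omega$ is finite-to-one. Given any decreasing sequence from $\mathcal{V}$, its $p$-preimages form a decreasing sequence from $\mathcal{W}$, and a pseudo-intersection $A\in\mathcal{W}$ of those preimages has image $p(A)\in\mathcal{V}$ almost contained in each member of the original sequence (the relevant differences being images of finite sets); hence $\mathcal{V}$ is a p-point, which is the first conclusion. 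By \cite{Ramsey-Class}, $\mathcal{W}$ is moreover rapid.

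It thus remains to prove: if $\mathcal{W}$ is a rapid p-point, $p\colon\omega\to\omega$ is finite-to-one, $\mathcal{V}=p(\mathcal{W})$, and the associated cut $(S,L)$ is proper, then $(S,L)$ is the standard cut. Because the cut is proper, $S$ contains all constants, so $L$ is contained in the nonstandard part of $\omega^{\omega}/\mathcal{V}$, and it suffices to show that every nonstandard element lies in $L$; that is, given $\gamma\colon\omega\to\omega$ with $[\gamma]$ nonstandard (equivalently $\gamma\to\infty$ along $\mathcal{V}$), I must produce $B\in\mathcal{W}$ with $[C_{B}]\le[\gamma]$. Since $\mathcal{V}$ is a p-point there is $V\in\mathcal{V}$ with $\gamma\!\upharpoonright\! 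V\to\infty$, so each set $V^{(k)}:=\{y\in V:\gamma(y)=k\}$ is finite; since $p$ is finite-to-one, each $J_{k}:=\bigcup\{p^{-1}\{y\}:y\in V^{(k)}\}$ is finite, the $J_{k}$ are pairwise disjoint, and $\bigcup_{k}J_{k}=p^{-1}(V)\in\mathcal{W}$. Transporting $p^{-1}(V)$ to $\omega$ so that $J_{0},J_{1},\dots$ become consecutive intervals (Remark~\ref{IsomorphismRemark}) and applying rapidity of $\mathcal{W}$ (which is inherited by the relativization to $p^{-1}(V)$), I obtain $B\in\mathcal{W}$ with $B\subseteq p^{-1}(V)$ and $|B\cap(J_{0}\cup\dots\cup J_{k})|\le k$ for all $k$. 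Then for every $y\in V$, writing $k=\gamma(y)$, we have $C_{B}(y)=|B\cap p^{-1}\{y\}|\le|B\cap J_{k}|\le k=\gamma(y)$; as $V\in\mathcal{V}$ and $C_{B}$ is finite-valued, this gives $[C_{B}]\le[\gamma]$, and since also $[C_{B}]\in L$ we conclude $[\gamma]\in L$. Therefore $L$ is precisely the nonstandard part and $(S,L)$ is the standard cut.

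The step I expect to require the most care is this last application of rapidity: one must combine the definition of rapidity with its stability under relativization to sets of the ultrafilter and with the isomorphism invariance of cuts (Remark~\ref{IsomorphismRemark}) in order to produce a single $B\in\mathcal{W}$ that simultaneously lies below $p^{-1}(V)$ and is thin with respect to the ad hoc block partition $\{J_{k}\}$. Recognizing $\mathcal{V}$ as a p-point and reducing to the rapid case are comparatively routine. In this approach Theorem~\ref{MainResult1} is not invoked: the Tukey-theoretic content enters only through the rapidity of the p-points in the Tukey type of $\mathcal{U}_{1}$.
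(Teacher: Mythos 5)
Your argument is correct, but it takes a genuinely different route from the paper's. The paper first invokes the Dobrinen--Todorcevic classification to replace $\mathcal{Y}$ and $g(\mathcal{Y})$ by $\mathcal{Y}_{j+1}$ and $\mathcal{Y}_{i+1}$, then uses the canonization theorem (Theorem 4.25 and Proposition 5.8 of \cite{Ramsey-Class}) to identify $g$, on a set of the ultrafilter, with a projection $\pi_{T(i')}$; this yields an explicit binomial bound on the fiber cardinalities as a function of depth, and selectivity for $\mathcal{R}_{1}$ then produces a set whose cardinality function is dominated by any prescribed nonstandard $f$. You bypass canonization entirely: you get the first conclusion from the elementary fact that Rudin--Keisler images of p-points are p-points, import from \cite{Ramsey-Class} only the rapidity of the p-points in the Tukey type, and then prove the clean general lemma that a rapid ultrafilter $\mathcal{W}$, a finite-to-one $p$, and a p-point image $p(\mathcal{W})$ force the associated proper cut to be standard. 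Your rapidity computation checks out: grouping the $p$-fibers into finite blocks $J_{k}$ according to the value of $\gamma$ and applying rapidity to $h(k)=1+\max(J_{0}\cup\dots\cup J_{k})$ gives $C_{B}(y)\le\gamma(y)$ on $V$ directly, so the detour through a bijection making the $J_{k}$ consecutive intervals is unnecessary (though harmless, since rapidity is preserved by bijections of the base and by restriction to sets of the ultrafilter). What each approach buys: yours is more general and portable --- it isolates rapidity as the operative property and would apply to any rapid p-point whose image is a p-point, independently of the $\mathcal{R}_{1}$ machinery, at the cost of leaning on the cited rapidity result as a black box; the paper's argument stays inside the $\mathcal{R}_{1}$ framework, identifies exactly which ultrafilters $\mathcal{V}$ occur (the $\mathcal{Y}_{i+1}$), and produces explicit quantitative bounds on the cardinality functions, which is information your abstract argument does not recover.
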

In Section $\ref{section6}$ we show that the main results imply the existence of special ultrafilters. We then conclude with some questions about the types of cuts obtainable from ultrafilters defined from other similar topological Ramsey spaces.

The author would like to express his deepest gratitude to Natasha Dobrinen for valuable comments and suggestions that helped make this article and its proofs more readable.

\section{ The topological Ramsey space $\mathcal{R}_{1}$}\label{section2}

We begin this section with the definition given by Dobrinen and Todorcevic in \cite{Ramsey-Class} of the triple $(\mathcal{R}_{1}, \le, r)$. The construction of $\mathcal{R}_{1}$ was motivated by the work of Laflamme in \cite{Laflamme} which uses forcing to adjoin a weakly-Ramsey ultrafilter satisfying complete combinatorics over $\mathrm{HOD}(\mathbb{R})$ (see \cite{Laflamme} for a definition of complete combinatorics.)
\begin{definition}[$(\mathcal{R}_{1}, \le, r)$, \cite{Ramsey-Class}]
For each $i<\omega$, let
\begin{equation}
T_{1}(i) = \left  \{ \left< \ \right>, \left < i \right >, \left< i, j \right >:  i(i+1)\le 2j <(i+1)(i+2) \right\} \mbox{ and }
\end{equation}
\begin{equation}
T_{1} = \bigcup_{i<\omega} T_{1}(i).
\end{equation}
\begin{figure}[!h]
\includegraphics[width=\linewidth]{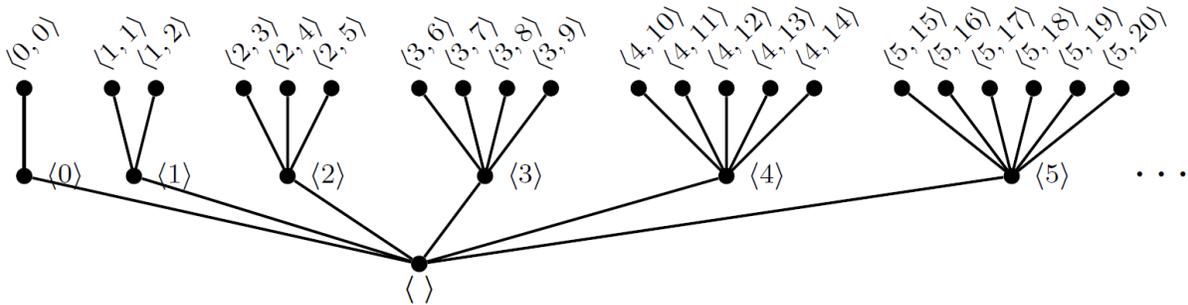}
\caption{Graph of $T_{1}$}
\label{fig:1}
\end{figure}
$\mathcal{R}_{1}$ consists of all subtrees of $T_{1}$ which are isomorphic to $T_{1}$. More precisely, $S \subseteq T_{1}$ is a member of $\mathcal{R}_{1}$ if and only if there exists a strictly increasing sequence $(k_{i})_{i<\omega}$ of natural numbers such that the following conditions hold:
\begin{align}
& \mbox{For each $i<\omega$, $S \cap T_{1}(k_{i})$ is isomorphic to $T_{1}(i)$.}\\
& \mbox{For each $k<\omega$, $S \cap T_{1}(k)\not = \{ \left< \ \right>\}$ implies that $k=k_{i}$ for some $i<\omega$.}
\end{align}

 For each tree $S \in \mathcal{R}_{1}$ and $i<\omega$, let $S(i) = S\cap T(k_{i})$ and $r_{i}(S) = \bigcup_{j<i} S(j)$. Let $\mathcal{AR} = \bigcup_{i<\omega} \{ r_{i}(S) : S\in\mathcal{R}_{1}\}$ and define $r: \omega\times \mathcal{R}_{1} \rightarrow \mathcal{AR}$ by letting $r(i, S)=r_{i}(S)$. For each $i<\omega$, let $\mathcal{R}(i) = \{ S(i) : S \in \mathcal{R}_{i}\}$ and $\mathcal{AR}_{i} = \{r_{i}(S): S\in\mathcal{R}_{1}\}$.

For $S,T\in\mathcal{R}_{1}$, $S\le T$ if and only if there exists a strictly increasing sequence $(k_{i})_{i<\omega}$ of natural numbers such that for each $i<\omega$, $S(k_{i}) \subseteq T(i)$. For $S,T\in\mathcal{R}_{1}$, $S\le^{*} T$ if and only there exists $i<\omega$ such that $S\setminus r_{i}(S)\subseteq T$. The relation $\le^{*}$ is called the \emph{almost-reduction relation on $\mathcal{R}_{1}$}.
\end{definition}
\begin{notation} If $S\in\mathcal{R}_{1}$ then we let $[S]$ denote the maximal nodes of $S$, that is the length two sequences in $S$. Similarly, for $s\in \mathcal{AR}$ and $p\in \bigcup_{i<\omega}\mathcal{R}_{1}(i)$ we let $[s]$ and $[p]$ denote the collection of maximal nodes of $s$ and $p$ respectively. If $S\in\mathcal{R}_{1}$ and $i<\omega$ then we let, $\mathcal{R}_{1}(i)\upharpoonright S = \{ p\in \mathcal{R}_{1}(i) : p \subseteq S\}$ and $\mathcal{AR}_{i}|S = \{ s \in \mathcal{AR}_{i} : s \subseteq S\}.$ For each $s\in\mathcal{AR}$, we let $\mathrm{depth}_{T_{1}}(s)$ be the least natural number $n$ such that $s \subseteq r_{n}(T_{1})$.
\end{notation}

Topological Ramsey theory was initiated by Ellentuck in \cite{Ellentuck} with his proof of an infinite-dimensional analogue of the Ramsey theorem called the Ellentuck theorem. After similar examples where discovered, the abstract notion of a topological Ramsey space was introduced in by Carlson and Simpson in \cite{CarlsonSimpson}. Carlson and Simpson also introduced a finite set of axioms sufficient for proving an abstract version of the Ellentuck theorem. The text \cite{RamseySpaces} by Todorcevic is now the standard reference for topological Ramsey spaces and proves the abstract Ellentuck theorem using four axioms. 

\begin{definition}
For each $s\in \mathcal{AR}$ and $S\in \mathcal{R}_{1}$ we let $[s,S]=\{ T\in \mathcal{R}_{1}: (\exists n) \ s = r_{n}( T ) \ \& \ T \le S\}$.
The \emph{Ellentuck topology on $\mathcal{R}_{1}$} is the topology generated by $\{[s,S]: s\in \mathcal{AR} \ \& \ S \in \mathcal{R}_{1}\}$. A subset ${\mathcal X}$ of ${\mathcal R}_{1}$ is \emph{Ramsey} if for every $\emptyset \not= [s,S],$ there is a $T \in [s,S]$ such that $[s,T] \subseteq {\mathcal X}$ or $[s,T]\cap {\mathcal X} = \emptyset.$
A subset ${\mathcal X}$ of ${\mathcal R}$ is \emph{Ramsey null} if for every $\emptyset \not= [s,S],$ there is a $T \in [s,S]$ such that $[s,S]\cap {\mathcal X} = \emptyset.$
\end{definition}
\begin{theorem}[The abstract Ellentuck theorem for $\mathcal{R}_{1}$, \cite{Ramsey-Class}] 
The triple $(\mathcal{R}_{1}, \le, r )$ forms a \emph{topological Ramsey space}. That is, every subset of $\mathcal{ R}_{1}$ with the Baire property is Ramsey and every meager subset of $\mathcal{R}_{1}$ is Ramsey null.
\end{theorem}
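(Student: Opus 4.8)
The final statement to prove is the abstract Ellentuck theorem for $\mathcal{R}_1$: that $(\mathcal{R}_1, \le, r)$ is a topological Ramsey space. The standard route, following Todorcevic's text \cite{RamseySpaces}, is to verify the four axioms \textbf{A.1}--\textbf{A.4} for a triple $(\mathcal{R}, \le, r)$ and then invoke the abstract Ellentuck theorem, which asserts that any triple satisfying these axioms has the Ramsey property for Baire-property sets. So I would not reprove Ellentuck's theorem from scratch; I would instead reduce to checking the axioms for the concrete triple $(\mathcal{R}_1, \le, r)$ defined above, citing \cite{RamseySpaces} for the implication ``axioms $\Rightarrow$ topological Ramsey space.''

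**Key steps, in order.** First, \textbf{A.1} (sequencing): verify that $r_0(S) = \langle\ \rangle$ (the trivial tree $\{\langle\ \rangle\}$), that $r_n(S) = r_n(T)$ determines the first $n$ ``blocks'' consistently, and that $r_n(S) \ne r_n(T)$ implies $r_m(S)\ne r_m(T)$ for all $m\ge n$ — all immediate from the definition $r_i(S) = \bigcup_{j<i}S(j)$ and the fact that the $S(j)$ are the successive level-blocks $S\cap T_1(k_j)$. Second, \textbf{A.2} (finitization of $\le$): exhibit the finite approximation $\le_{\mathrm{fin}}$ on $\mathcal{AR}$ — namely $s \le_{\mathrm{fin}} t$ iff $[s] $ refines $[t]$ blockwise — and check that $S\le T$ holds iff $r_n(S)\le_{\mathrm{fin}} r_{m}(T)$ for suitable cofinally many $n,m$, together with the requisite finiteness of $\{t : t\le_{\mathrm{fin}} s\}$. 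Third, \textbf{A.3} (amalgamation): show that for $a\in\mathcal{AR}_n$ and $S$ with $a\in r_n[\,\emptyset,S\,]$, the set $r_{n+1}[a,S]$ is nonempty, and that if $S'\le S$ with $[a,S']\ne\emptyset$ then there is $T\le S$ with $[a,T]\subseteq[a,S']$ — this uses the homogeneity of $T_1$, i.e.\ that every tree in $\mathcal{R}_1$ looks like $T_1$ from any depth onward, so one can always graft an acceptable next block. Fourth, the pigeonhole axiom \textbf{A.4}: given a coloring of $\mathcal{AR}_{n+1}|S$ (equivalently of the allowable ``one-block extensions'' $S(n)$ of $r_n(S)$) into finitely many colors, find $T\le S$ with $r_n(T)=r_n(S)$ on which $\mathcal{AR}_{n+1}|T$ is monochromatic. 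Since each block $S(n)$ is a finite tree isomorphic to the fixed finite tree $T_1(n)$, the space $\mathcal{R}_1(n)\upharpoonright S$ of allowable blocks is itself amenable to a finite Ramsey-type argument (a Hales–Jewett / finite-union pigeonhole for the particular combinatorial shape of $T_1(n)$), and this is exactly the content Dobrinen and Todorcevic establish for $\mathcal{R}_1$; I would cite their verification in \cite{Ramsey-Class} rather than reprove it.

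**The main obstacle.** The only genuinely nontrivial axiom is \textbf{A.4}, the pigeonhole principle: one must show that colorings of the single-block extensions can be stabilized by passing to a subtree in $\mathcal{R}_1$. The difficulty is combinatorial and specific to the shape of $T_1$ — each $T_1(i)$ is a little tree with root $\langle\ \rangle$, one node $\langle i\rangle$, and a spread of leaves $\langle i,j\rangle$ whose number grows with $i$ — so the pigeonhole must be uniform enough to survive infinitely many coordinates simultaneously while respecting the increasing arities. This is where the real work lies, and it is precisely the part carried out by Dobrinen and Todorcevic; for the purposes of this paper I would present the axiom verification at the level of indicating why each axiom holds and defer the hard pigeonhole to \cite{Ramsey-Class}, then conclude by the abstract Ellentuck theorem of \cite{RamseySpaces} that every Baire-property subset of $\mathcal{R}_1$ is Ramsey and every meager subset is Ramsey null.
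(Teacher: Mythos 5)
Your proposal is consistent with the paper, which does not prove this theorem itself but imports it from Dobrinen and Todorcevic \cite{Ramsey-Class}; the verification there proceeds exactly as you outline, by checking axioms \textbf{A.1}--\textbf{A.4} for $(\mathcal{R}_1,\le,r)$ and invoking the abstract Ellentuck theorem of \cite{RamseySpaces}, with the pigeonhole \textbf{A.4} being the only substantive step. Since you likewise defer that step to the cited source, your route is essentially the same as the paper's, and nothing further is required here.
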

The next theorem, which we use in Section $\ref{section6}$, can be thought of as a finite version of the Ramsey theorem for the space $\mathcal{R}_{1}$. The result is Theorem 3.5 of Mijares in \cite{MijaresGalvin} applied to the topological Ramsey space $\mathcal{R}_{1}$.
\begin{theorem}[The finite Ramsey theorem for $\mathcal{R}_{1}$, \cite{MijaresGalvin}]\label{finiteRamseyTheorem}
Let $k,n<\omega$ with $k\le n$ be given. Then, there exists $m<\omega$ such that for each $p\in\mathcal{R}_{1}(m)$ and each partition of $\mathcal{R}_{1}(k)\upharpoonright p$ into two parts there exists $q\in\mathcal{R}_{1}(n)\upharpoonright p$ such that $\mathcal{R}_{1}(k)\upharpoonright q$ lies in exactly one one part of the partition.
\end{theorem}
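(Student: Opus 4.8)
The plan is to deduce this finite partition result from the infinite Ramsey property of $\mathcal{R}_1$ furnished by the abstract Ellentuck theorem above, through the familiar compactness passage (via K\"onig's lemma) from an infinite partition theorem to a finite one. This is, in fact, exactly the content of Theorem~3.5 of \cite{MijaresGalvin}, which holds in an arbitrary topological Ramsey space; so once the abstract Ellentuck theorem for $(\mathcal{R}_1,\le,r)$ is available, it suffices to quote that result. If instead one wants a direct argument, here is how I would proceed.

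First I would isolate the infinite version. Fix $k\le n$, a tree $S\in\mathcal{R}_1$, and a partition of $\mathcal{R}_1(k)\upharpoonright S$ into two parts. Whether a given $k$-dimensional subobject sits below a tree $T$ is decided by an initial, finite-depth portion of $T$, so each of the two parts pulls back to a clopen subset of $\mathcal{R}_1$ in the Ellentuck topology. Applying the Ramsey property (the abstract Ellentuck theorem) to one of these clopen sets over $[\emptyset,S]$ produces $T\le S$ all of whose $k$-dimensional subobjects lie in a single part. Since $\mathcal{R}_1(n)\upharpoonright T\neq\emptyset$ and $\mathcal{R}_1(k)\upharpoonright q\subseteq\mathcal{R}_1(k)\upharpoonright T$ for every $q\in\mathcal{R}_1(n)\upharpoonright T$, this already yields the conclusion of the theorem with the finite $p\in\mathcal{R}_1(m)$ replaced by an infinite $T\in\mathcal{R}_1$.

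Next comes the compactness step that produces the uniform bound $m$. Suppose it fails for the fixed pair $k\le n$. Then for every (large) $m$ there is a \emph{bad} coloured approximation: a finite piece of some member of $\mathcal{R}_1$, together with a two-part partition of its $\mathcal{R}_1(k)$-subobjects admitting no $\mathcal{R}_1(n)$-subobject all of whose $\mathcal{R}_1(k)$-subobjects lie in one part. Up to the natural isomorphism of such coloured finite structures there are only finitely many of each size, and restricting a bad coloured approximation to any smaller one again gives a bad coloured approximation --- because the $\mathcal{R}_1(n)$-subobjects of the smaller piece are $\mathcal{R}_1(n)$-subobjects of the larger. Hence, under the restriction relation, the bad coloured approximations form a finitely branching, infinite tree; K\"onig's lemma supplies an infinite branch, which coheres level by level into some $S\in\mathcal{R}_1$ carrying a two-part partition of $\mathcal{R}_1(k)\upharpoonright S$ that admits no monochromatic $q\in\mathcal{R}_1(n)\upharpoonright S$ --- contradicting the infinite version of the previous paragraph.

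The step I expect to be the main obstacle is the bookkeeping in this last paragraph: identifying the finite approximations appropriate to $\mathcal{R}_1$ and the right notion of isomorphism type of a coloured one, so that there really are only finitely many per size and the restriction relation organizes them into a finitely branching tree, and then checking that an infinite branch reassembles a genuine member of $\mathcal{R}_1$ (the associated level sequence $(k_i)$ strictly increasing, the block data and colourings cohering). All of this is routine in the general topological-Ramsey-space setting, which is why the statement is quoted from \cite{MijaresGalvin} in the text; the outline above is what one would write out for a direct proof.
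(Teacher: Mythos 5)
The paper offers no proof of this statement---it is quoted as Theorem~3.5 of \cite{MijaresGalvin}---so there is no in-text argument to measure yours against; your overall strategy (infinite version via the abstract Ellentuck theorem, then a compactness descent to the finite bound) is the standard one. There is, however, one step in your sketch that does not survive inspection as written: the reassembly at the end of the K\"onig's lemma argument. The bad finite approximations here are single blocks $p\in\mathcal{R}_{1}(m)$, and a nested branch of them unions to a single infinite level---a tree of height two with infinitely many maximal nodes all sharing the same first coordinate---which is \emph{not} a member of $\mathcal{R}_{1}$, so the branch does not ``cohere into some $S\in\mathcal{R}_{1}$.'' To finish along these lines you must instead transfer the limit colouring back onto a genuine $S\in\mathcal{R}_{1}$, e.g.\ by fixing for each $i$ an injection of $[S(i)]$ into the maximal-node set of the limit object and pulling the colouring back block by block (this is well defined because every $q\in\mathcal{R}_{1}(k)\upharpoonright S$ lies inside a single block $S(i)$), and then invoke the infinite version for that $S$.

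For this particular space all of this machinery can in fact be bypassed. An element $p\in\mathcal{R}_{1}(m)$ has exactly $m+1$ maximal nodes, and the map $q\mapsto[q]$ identifies $\mathcal{R}_{1}(k)\upharpoonright p$ with the $(k+1)$-element subsets of $[p]$: any such subset, together with the root and the stem node, is a legal member of $\mathcal{R}_{1}(k)$, and likewise $\mathcal{R}_{1}(n)\upharpoonright p$ corresponds to the $(n+1)$-element subsets of $[p]$. Under this identification the statement is verbatim the classical finite Ramsey theorem, and one may take $m+1$ to be the least $N$ such that every $2$-colouring of the $(k+1)$-subsets of an $N$-element set admits a monochromatic $(n+1)$-subset. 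This is both more elementary and avoids the compactness bookkeeping entirely.
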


Next we define a collection of natural projection maps related to the space $\mathcal{R}_{1}$. Dobrinen and Todorcevic in \cite{Ramsey-Class} have used these projections to completely characterize the Tukey ordering below weakly-Ramsey ultrafilters obtained from forcing with $\mathcal{R}_{1}$ using almost-reduction.
\begin{definition} Each $x \in[T_{1}]$ is a sequence of natural numbers of length two, which we denote by $\left<x_{0},x_{1}\right>$.
Let $\pi:[T_{1}] \rightarrow \omega$ be the map which sends $x\in[T_{1}]$ to $x_{0}$.  For each $i<j<\omega$, define $\pi_{T(i)}:\mathcal{R}_{1}(j) \rightarrow \mathcal{R}_{1}(i)$ to be the map that removes the right-most $j-i$ branches of a given element of $\mathcal{R}_{1}(j)$.
\end{definition}

\section{Selective and Ramsey for $\mathcal{R}_{1}$.} \label{section3}
In this section we introduce the generalizations of Ramsey ultrafilter that we study in this manuscript and prove some theorems needed to for the main results in Sections $\ref{section5}$ and $\ref{section6}$. First we remind the reader of the definition of  selective ultrafilter on $\omega$.
\begin{definition} Let $\mathcal{U}$ be an ultrafilter on $\omega$. $\mathcal{U}$ is \emph{selective}, if for each sequence $A_{0}\supseteq A_{1} \supseteq A_{2}\supseteq \cdots$ of members of $\mathcal{U}$ there exists $A=\{a_{0},a_{1},\dots\} \in \mathcal{U}$ enumerated in increasing order such that for each $i<\omega$, $A\setminus\{a_{0}, \dots, a_{i-1}\} \subseteq A_{i}$. 
\end{definition}
By a result of Kunen in \cite{Booth}, an ultrafilter is Ramsey if and only if it is selective. For each topological Ramsey space $\mathcal{R}$, Mijares in \cite{MijaresSelective} has introduced the notions of Ramsey for $\mathcal{R}$ and selective for $\mathcal{R}$ ultrafilters. If $\mathcal{R}$ is taken to be the Ellentuck space the notions are equivalent and reduce to Ramsey and selective given above. Below, we follow the presentation of Ramsey for $\mathcal{R}_{1}$ and selective for $\mathcal{R}_{1}$ given by Dobrinen and Todorcevic in \cite{Ramsey-Class}.

\begin{definition}Let $\mathcal{U}$ be an ultrafilter on $[T_{1}]$.  ${\mathcal U}$ is \emph{generated by ${\mathcal C}\subseteq {\mathcal R}_{1}$} if and only if $\{ [S] : S\in{\mathcal C}\}$ is cofinal in $({\mathcal U}, \supseteq)$. An ultrafilter $\mathcal{U}$ generated by ${\mathcal C}\subseteq \mathcal{R}_{1}$ is \emph{selective for ${\mathcal R}_{1}$}, if for each decreasing sequence $S_{0}\ge S_{1} \ge S_{2} \ge \cdots$ of members of ${\mathcal C}$, there is another $S \in {\mathcal C}$ such that for each $i<\omega$, $S\setminus r_{i}(S) \subseteq S_{i}.$ 
\end{definition}

\begin{theorem}\label{selectThm}
Suppose that $\mathcal{U}$ is an ultrafilter on the base set $[T_{1}]$ and generated by a subset ${\cal C}$ of ${\cal R}_{1}$. The following statements are equivalent:
\begin{enumerate}
\item $\mathcal{U}$ is selective for $\mathcal{R}_{1}$.
\item $\mathcal{U}$ is a p-point and $\pi(\mathcal{U})$ is selective. 
\item for each decreasing sequence $S_{0}\ge S_{1} \ge S_{2} \ge \cdots$ of members of ${\mathcal C}$, there is another $S \in {\mathcal C}$ such that for each $n<\omega$, $S\setminus r_{n}(S) \subseteq S_{\mathrm{depth}_{T_{1}}(r_{n}(S))}.$ 
\end{enumerate}
\end{theorem}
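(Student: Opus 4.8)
The plan is to prove the cycle of implications $(1)\Rightarrow(2)\Rightarrow(3)\Rightarrow(1)$. Two facts would be used throughout. \emph{(a)} Since $\{[S]:S\in\mathcal{C}\}$ is cofinal in $(\mathcal{U},\supseteq)$ and $\mathcal{U}$ is a filter, every $\supseteq$-decreasing sequence in $\mathcal{U}$ can be refined to a sequence $([S_n])_{n<\omega}$ with each $S_n\in\mathcal{C}$; moreover $[S']\subseteq[S]$ (for $S,S'\in\mathcal{R}_1$) implies $S'\le S$, because each block of $S'$ is a set of length-two nodes of $S$ with a common first coordinate and hence lies in a single block of $S$, with distinct blocks of $S'$ forced into distinct, strictly increasingly indexed blocks of $S$; so such a refinement can be taken $\le$-decreasing in $\mathcal{C}$. \emph{(b)} For $S\in\mathcal{R}_1$ the map $\pi\upharpoonright[S]$ is finite-to-one, $\pi([S])$ is the increasing enumeration of the heights of $S$, $\pi\bigl([S]\setminus[r_n(S)]\bigr)$ is $\pi([S])$ with its first $n$ elements removed, and $[r_n(S)]$ is finite.

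$(3)\Rightarrow(1)$ is immediate, since $\mathrm{depth}_{T_1}(r_n(S))$ is one more than the $(n-1)$-st height of $S$ and so is at least $n$, whence $S_{\mathrm{depth}_{T_1}(r_n(S))}\subseteq S_n$. For $(1)\Rightarrow(2)$ one chases definitions with \emph{(a)}, \emph{(b)}: given $\supseteq$-decreasing $(A_n)$ in $\mathcal{U}$, refine to $\le$-decreasing $(S_n)$ in $\mathcal{C}$ with $[S_n]\subseteq A_n$ and apply $(1)$ to get $S\in\mathcal{C}$ with $[S]\setminus[r_n(S)]\subseteq A_n$; as $[r_n(S)]$ is finite, $[S]\subseteq^*A_n$ for all $n$, so $\mathcal{U}$ is a p-point. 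Given $\supseteq$-decreasing $(B_n)$ in $\pi(\mathcal{U})$, refine $(\pi^{-1}(B_n))$ to $\le$-decreasing $(S_n)$ in $\mathcal{C}$ with $\pi([S_n])\subseteq B_n$, apply $(1)$ to get $S\in\mathcal{C}$, and observe via \emph{(b)} that $B:=\pi([S])=\{b_0<b_1<\cdots\}$ satisfies $B\setminus\{b_0,\dots,b_{n-1}\}=\pi\bigl([S]\setminus[r_n(S)]\bigr)\subseteq B_n$; so $\pi(\mathcal{U})$ is selective.

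The substantive step is $(2)\Rightarrow(3)$. Let $(S_n)$ be $\le$-decreasing in $\mathcal{C}$. \emph{(i)} As $\mathcal{U}$ is a p-point pick $V\in\mathcal{C}$ with $[V]\subseteq^*[S_n]$ for all $n$, let $e(n)$ be least with $V\setminus r_{e(n)}(V)\subseteq S_n$ (so $e$ may be taken nondecreasing), write $\pi([V])=\{v_0<v_1<\cdots\}$, and put $\phi(i):=e(v_i+1)$. \emph{(ii)} Let $\mathcal{V}'$ be the selective ultrafilter on $\omega$ with $I\in\mathcal{V}'\iff\{v_i:i\in I\}\in\pi(\mathcal{U})$, and colour $\{i<j\}\in[\omega]^2$ by whether $j\ge\phi(i)$. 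A set homogeneous in $\mathcal{V}'$ for the ``$<$'' colour would be infinite yet bounded by $\phi$ of its least element, so there is $I=\{i_0<i_1<\cdots\}\in\mathcal{V}'$, homogeneous for ``$\ge$'', with $i_m\ge\phi(i_{m-1})$ for all $m\ge1$ (and, enlarging $i_0$, with $i_0\ge e(0)$); then $H:=\{v_i:i\in I\}\in\pi(\mathcal{U})$, so $\pi^{-1}(H)\cap[V]\in\mathcal{U}$. \emph{(iii)} The block of $V$ at height $v_{i_m}$ has $i_m+1\ge m+1$ maximal nodes, and one must discard $i_m-m$ of them to obtain a copy $S(m)$ of $T_1(m)$ inside that block while keeping $\bigcup_m[S(m)]\in\mathcal{U}$; this reduces to showing that if $W\in\mathcal{U}$ is partitioned into finite pieces $W_m$ with $|W_m|\ge m+1$, then some choice of $(m+1)$-element subsets of the $W_m$ has union in $\mathcal{U}$, which I would derive from the p-point property by partitioning $W$ into its ``rank columns'' (the $\ell$-th node, in a fixed order, of each piece) and arguing that either a single column lies in $\mathcal{U}$, or a $\mathcal{U}$-set meeting every column finitely is produced and pseudo-intersected to the same effect. \emph{(iv)} Then $S:=\bigcup_m S(m)\in\mathcal{R}_1$ has $m$-th height $v_{i_m}$, and $S(m)\subseteq V\setminus r_{i_m}(V)\subseteq S_{v_{i_{m-1}}+1}$ because $i_m\ge\phi(i_{m-1})=e(v_{i_{m-1}}+1)$; since $v_{i_{m-1}}+1=\mathrm{depth}_{T_1}(r_m(S))$, $S$ satisfies the conclusion of $(3)$ except for lying in $\mathcal{C}$. \emph{(v)} Finally pick $S''\in\mathcal{C}$ with $[S'']\subseteq[S]$, so $S''\le S$, say $S''(k)\subseteq S(\ell_k)$ with $\ell_0<\ell_1<\cdots$; then $S''(k)\subseteq S_{\mathrm{depth}_{T_1}(r_{\ell_k}(S))}\subseteq S_{\mathrm{depth}_{T_1}(r_k(S''))}$ (using $\ell_{k-1}<\ell_k$), so $S''$ witnesses $(3)$.

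I expect the main obstacle to be step \emph{(iii)}: $S$ must be a genuine element of $\mathcal{R}_1$, with blocks of exactly the prescribed sizes, while $[S]$ must remain $\mathcal{U}$-large, and these pull against each other once the retained blocks of $V$ are far larger than needed — reconciling them is where the p-point hypothesis is used in an essential, non-obvious way (one also needs, in \emph{(ii)}, to thin $H$ enough using selectivity of $\pi(\mathcal{U})$ that the inequality $i_m\ge\phi(i_{m-1})$ is compatible with $H\in\pi(\mathcal{U})$). An equivalent organization folds \emph{(ii)}--\emph{(iii)} into one fusion in which the $m$-th block of $S$ is chosen both to capture a prescribed portion of a pre-selected $\mathcal{U}$-set and to lie inside $S_{\mathrm{depth}_{T_1}(r_m(S))}$; ``p-point'' and ``$\pi(\mathcal{U})$ selective'' then enter in precisely these two roles.
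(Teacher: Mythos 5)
Your (1)$\Rightarrow$(2) and (3)$\Rightarrow$(1) coincide with the paper's, and your overall route for (2)$\Rightarrow$(3) — pseudo-intersect the $[S_n]$ using the p-point property, record by a function $e$ how deep into the pseudo-intersection $V$ one must go to be inside $S_n$, use selectivity of $\pi(\mathcal{U})$ to produce $H\in\pi(\mathcal{U})$ whose elements are sufficiently spread out, then drop to a generator — is also the paper's (the paper achieves the spreading by finding a set on which an interval-counting function is increasing and taking every other element, instead of your pair colouring; that difference is cosmetic). The genuine gap is step (iii). The statement you reduce it to, that any $W\in\mathcal{U}$ partitioned into finite pieces $W_m$ with $|W_m|\ge m+1$ admits $(m+1)$-element subsets $W'_m\subseteq W_m$ with $\bigcup_m W'_m\in\mathcal{U}$, is not a consequence of being a p-point: it is a $Q$-point/rapidity-type growth restriction relative to an arbitrary interval partition, and your ``rank columns'' sketch only produces a set meeting each fixed column (or finite union of columns) finitely, which puts no bound of the form $|A\cap W_m|\le m+1$, since $A\cap W_m$ may consist of elements from columns of arbitrarily large index. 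In the present situation the selection does exist, but only because $\mathcal{U}$ is generated by $\mathcal{C}\subseteq\mathcal{R}_1$ (pad the blocks of a generator inside $W$), a fact your step (iii) never invokes.

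The fix is that (iii) is an unnecessary detour, and deleting it turns your argument into the paper's. Having obtained $I=\{i_0<i_1<\cdots\}$ with $H=\{v_i:i\in I\}\in\pi(\mathcal{U})$, homogeneity $j\ge\phi(i)$ for \emph{all} pairs $i<j$ in $I$, and $i_0\ge e(0)$, use directly that $\mathcal{C}$ generates $\mathcal{U}$ to pick $S''\in\mathcal{C}$ with $[S'']\subseteq\pi^{-1}(H)\cap[V]\in\mathcal{U}$. Each block of $S''$ lies inside a block $V(i_m)$ with $i_m\in I$, so if $r_n(S'')$ has top height $v_{i_m}$, then every node of $S''\setminus r_n(S'')$ lies in some $V(i_{m'})$ with $i_{m'}>i_m$, whence $i_{m'}\ge\phi(i_m)=e(v_{i_m}+1)=e(\mathrm{depth}_{T_1}(r_n(S'')))$ and $S''\setminus r_n(S'')\subseteq V\setminus r_{e(\mathrm{depth}_{T_1}(r_n(S'')))}(V)\subseteq S_{\mathrm{depth}_{T_1}(r_n(S''))}$. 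Note this is exactly where homogeneity over all pairs (rather than consecutive ones) is needed, because the heights of $S''$ form an arbitrary subset of $H$; with that observation your computations (iv)--(v) apply verbatim to $S''$, no trimmed tree required.
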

\begin{proof} First we show that $1. \Rightarrow 2.$ Suppose that $\mathcal{U}$ is selective for $\mathcal{R}_{1}$.  To show that $\mathcal{U}$ is a p-point consider a sequence $A_{0}\supseteq A_{1} \supseteq A_{2} \supseteq \dots$ of elements of $\mathcal{U}$. Since $\mathcal{U}$ is generated by $\mathcal{C}$ there exists a sequence $S_{0} \ge S_{1} \ge S_{2} \ge \dots$ of elements of $\mathcal{C}$ such that for all $i<\omega$, $[S_{i}] \subseteq A_{i}$. Since $\mathcal{U}$ is selective for $\mathcal{R}_{1}$ there exists $S\in\mathcal{S}$ such that for all $i<\omega$, $S\setminus r_{i}(S) \subseteq S_{i}$. As each $r_{i}(S)$ is finite for each $i<\omega$, $[S] \subseteq^{*} [S_{i}]\subseteq A_{i}$. Hence $\mathcal{U}$ is a p-point ultrafilter on $[T_{1}]$.

To show that $\pi(\mathcal{U})$ is selective consider a sequence $X_{0}\supseteq X_{1} \supseteq X_{2} \supseteq \dots$ of elements of $\pi(\mathcal{U})$. Since $\mathcal{U}$ is generated by $\mathcal{C}$ there exists a sequence $S_{0} \ge S_{1} \ge S_{2} \ge \dots$ of elements of $\mathcal{C}$ such that for all $i<\omega$, $\pi'' [S_{i}] \subseteq X_{i}$.  Since $\mathcal{U}$ is selective for $\mathcal{R}_{1}$ there exists $S\in\mathcal{S}$ such that for all $i<\omega$, $S\setminus r_{i}(S) \subseteq S_{i}$. Let $\{x_{0}, x_{1}, \dots\}$ be the increasing enumeration of $\pi''[S]$. Then for each $i<\omega$, $\pi''[S] \setminus\{x_{0}, x_{1}, \dots, x_{i-1}\} = p''([S]\setminus [r_{i}(S)]) \subseteq \pi''[S_{i}] \subseteq X_{i}$. $\pi(\mathcal{U})$ is selective as $\{x_{0},x_{1},\dots\}$ is in $\pi(\mathcal{U})$.

Next we show that $2.\Rightarrow 3.$ Suppose that ${\cal U}$ is a p-point and $\pi({\cal U})$ is selective. To show that $\mathcal{U}$ satisfies condition $3.$, consider an arbitrary decreasing sequence $S_{0}\ge S_{1} \ge S_{2} \ge \cdots$  of members of ${\cal C}$. There is an $S\in {\cal C}$ such that for each $n<\omega$, $[S] \subseteq^{*} [S_{n}]$. Let $(k_{i})_{i<\omega}$ be the strictly increasing sequence such that for all $i<\omega$, $S(i) \subseteq T_{1}(k_{i})$. Define $(k'_{n})_{n<\omega}$ recursively by letting,
\begin{equation}
\begin{cases}
k'_{0} = k_{0},\\
\mbox{$k'_{i+1}$ is the smallest $k_{j}>k'_{i}$ such that $\{ x\in [S]: \pi(x) >k_{j}\} \subseteq X_{k'_{i}}$.}
\end{cases}\end{equation}
Now define $g:\omega \rightarrow \omega$ by letting
\begin{equation}
g(n)= i  \text{ if }  k'_{i} \le n < k'_{i+1}.
\end{equation}
Note that $g$ can not be constant mod $\pi({\cal U})$ as $\pi(\mathcal{U})$ is non-principal. Since $\pi({\cal U})$ is selective, it must be the case that there is a $Y \subseteq \pi''[S]$ such that $g$ is increasing on $Y\in \pi({\cal U})$. Enumerate $Y$ in increasing order,  as $\{y_{0},y_{1}, \dots \}$. Then either $\{y_{0},y_{2}, y_{4}, \dots \}$ or $\{y_{1}, y_{3},y_{5}, \dots \}$ is a member of $\pi({\cal U})$. Let $Z=\{z_{0}, z_{1}, \dots\}$ denote which ever is in $\pi({\cal U})$. By construction, we find that for each pair $i<j$ of natural numbers there exists $k'_{l+1}<\omega$ such that $z_{i} < k'_{l+1}<z_{j}$. 

Since $\pi^{-1}(Z) \in {\cal U}$ and $[S] \in {\cal U}$, there is a $S' \in {\cal C}$ such that $[S']\subseteq \pi^{-1}(Z) \cap [S]$. Let $(k''_{i})_{i<\omega}$ be the strictly increasing sequence such that for all $i<\omega$, $S'(i) \subseteq T_{1}(k''_{i})$.  Since $\pi''[S'] \subseteq Z$, we find that for each $n<\omega$, each $m>n$ and each $s \in [S'(m)]$, there exists $k'_{l+1}$ such that $\pi(s) = k''_{m}> k'_{l+1} > k''_{n}.$ 
By definition of the sequence $(k'_{i})_{i<\omega}$, it follows that $s \in [S_{k'_{l}}]$. On the other hand,  $ k'_{l+1} > k''_{n}$ implies that $ S_{k'_{l}} \subseteq S_{k''_{n}}$. So $s \in [S_{k''_{n}}]$ and $ S'\setminus r_{n}(S') \subseteq S_{k''_{n}}= S_{\mathrm{depth}_{T_{1}}(r_{n}(S'))}$. Hence $2. \Rightarrow 3.$ holds.

Next note that for each $S\in\mathcal{R}_{1}$, $\mathrm{depth}_{T_{1}}(r_{n}(S)) \ge n$. Hence $3.\Rightarrow 1.$ holds trivially. 
\end{proof}

\begin{definition}Let $\mathcal{U}$ be an ultrafilter on $[T_{1}]$ generated by ${\mathcal C}\subseteq \mathcal{R}_{1}.$ $\mathcal{U}$ is \emph{Ramsey for ${\mathcal R}_{1}$}, if for every $i<\omega$ and every partition of $\mathcal{AR}_{i}$ into two parts there exists $S\in \mathcal{C}$ such that $\mathcal{AR}_{i}|S$ lies in one part of the partition.
\end{definition}

\begin{theorem}\label{EquivalentRamsey}
Suppose that $\mathcal{U}$ is an ultrafilter on $[T_{1}]$ generated by $\mathcal{C}\subseteq \mathcal{R}_{1}$. $\mathcal{U}$ is Ramsey for $\mathcal{R}_{1}$ if and only if $\mathcal{U}$ is selective for $\mathcal{R}_{1}$ and for each $n<\omega$, $\mathcal{U}|  \mathcal{R}_{1}(n) =\{ \mathcal{R}_{1}(n) \upharpoonright A : A\in \mathcal{C}\}$ forms an ultrafilter on $\mathcal{R}_{1}(n)$.
\end{theorem}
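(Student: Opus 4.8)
The plan is to prove both directions by exploiting the characterization of selectivity for $\mathcal{R}_1$ from Theorem~\ref{selectThm}, together with the combinatorial fact that the levels $\mathcal{R}_1(n)$ and the approximation sets $\mathcal{AR}_n$ are finite objects, so ultrafilter-on-a-finite-fiber arguments reduce to pigeonhole. First I would fix the easy implication: assume $\mathcal{U}$ is Ramsey for $\mathcal{R}_1$. Given a partition of $\mathcal{AR}_i$ into two parts, the defining property immediately yields $S \in \mathcal{C}$ with $\mathcal{AR}_i|S$ monochromatic; in particular, restricting attention to partitions coming from ``which block of $\mathcal{R}_1(n)$ does the top level of $s$ land in'' shows that for each $n$, the family $\{\mathcal{R}_1(n)\upharpoonright A : A \in \mathcal{C}\}$ has the finite intersection property and decides every subset of the finite set $\mathcal{R}_1(n)$, hence is an ultrafilter. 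To get selectivity for $\mathcal{R}_1$, I would verify condition~(3) of Theorem~\ref{selectThm}: given a decreasing sequence $S_0 \ge S_1 \ge \cdots$ in $\mathcal{C}$, use the Ramsey-for-$\mathcal{R}_1$ property applied (for each $i$) to the partition of $\mathcal{AR}_i$ into those $s$ with $[s,T_1]$ meeting $S_{\mathrm{depth}_{T_1}(s)}$ appropriately and the rest; a diagonalization over $i$, legitimate because $\mathcal{U}$ restricted to each $\mathcal{AR}_i$ is already known to be an ultrafilter and these cohere, produces the required $S$. (Here I expect to lean on the abstract Ellentuck theorem to pass from ``for each $i$ there is a good $S_i$'' to a single $S$ working for all $i$ simultaneously, via a fusion/diagonal argument inside $\mathcal{C}$.)

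For the converse, assume $\mathcal{U}$ is selective for $\mathcal{R}_1$ and that each $\mathcal{U}|\mathcal{R}_1(n)$ is an ultrafilter on the finite set $\mathcal{R}_1(n)$; I want to show $\mathcal{U}$ is Ramsey for $\mathcal{R}_1$. Fix $i < \omega$ and a partition $\mathcal{AR}_i = P_0 \cup P_1$. Since $\mathcal{AR}_i = \bigcup\{\mathcal{AR}_i|S : S \in \mathcal{C}\}$ and, for a fixed $S$, the set $\mathcal{AR}_i|S$ is finite, the problem is to find one $S \in \mathcal{C}$ on which the partition is trivial. The idea is to build $S$ level by level: use that $\mathcal{U}|\mathcal{R}_1(j)$ is an ultrafilter to choose, at each stage, a block of $\mathcal{R}_1(j)$ that is "generic" for all the finitely many relevant partition questions about extending a fixed $r_i$-approximation; then use selectivity (condition~(3) of Theorem~\ref{selectThm}) to glue these choices into a single tree $S \in \mathcal{C}$. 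The finite Ramsey theorem for $\mathcal{R}_1$ (Theorem~\ref{finiteRamseyTheorem}) is the natural tool to make each level-by-level choice, since it guarantees that far enough up $T_1$ one can find a copy of $\mathcal{R}_1(n)$ on which a two-coloring of $\mathcal{R}_1(k)\upharpoonright q$ is constant; feeding the colorings induced by $P_0, P_1$ into that theorem and then selecting the resulting "good" blocks along an $\mathcal{U}$-large set does the job.

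I would organize the converse as: (a) reduce, by the finite Ramsey theorem, the task to finding $S \in \mathcal{C}$ all of whose initial segments $r_i(S)$ of depth $\le N$ lie in a single block, for arbitrarily large $N$; (b) for each $N$ obtain such an $S_N \in \mathcal{C}$ using that each $\mathcal{U}|\mathcal{R}_1(j)$ is an ultrafilter (to pick the correct block at level $j$) together with genericity of $\mathcal{C}$; (c) apply selectivity for $\mathcal{R}_1$ to the decreasing modification of the sequence $(S_N)$ to diagonalize into a single $S$ with $\mathcal{AR}_i|S \subseteq P_0$ (say). The main obstacle I anticipate is step~(c): selectivity as stated in Theorem~\ref{selectThm} only gives $S \setminus r_n(S) \subseteq S_{\mathrm{depth}_{T_1}(r_n(S))}$, so I must arrange the sequence $(S_N)$ so that this "shifted containment" actually propagates the monochromaticity of \emph{all} of $\mathcal{AR}_i|S$, not just a tail — this requires carefully indexing the approximations by depth rather than by level and checking that an approximation $s \in \mathcal{AR}_i|S$ with $\mathrm{depth}_{T_1}(s) = d$ is forced, by $S \setminus r_n(S) \subseteq S_d$ for the appropriate $n$, to lie in the block that $S_d$ was chosen to realize. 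Making that bookkeeping watertight, and verifying that the initial finite piece $r_i$ of $S$ (which is not controlled by the "tail" conclusion of selectivity) can be absorbed by passing to a further element of $\mathcal{C}$ below, is where the real work lies; everything else is pigeonhole on finite sets plus the two cited Ramsey theorems.
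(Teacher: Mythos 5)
Your proposal rests on a false premise that unravels both directions: the sets $\mathcal{R}_{1}(n)$ and $\mathcal{AR}_{i}$ are \emph{not} finite. An element of $\mathcal{R}_{1}(n)$ is a finite tree, but $\mathcal{R}_{1}(n)$ itself is a countably infinite collection (one may place the copy of $T_{1}(n)$ inside $T_{1}(k)$ for any $k\ge n$ and choose any $n+1$ of the $k+1$ branches), and likewise $\mathcal{AR}_{i}|S$ is infinite for every $S\in\mathcal{R}_{1}$. Consequently ``ultrafilter-on-a-finite-fiber arguments reduce to pigeonhole'' is not available: the hypothesis that $\mathcal{U}|\mathcal{R}_{1}(n)$ is an ultrafilter is a genuinely nonprincipal ultrafilter condition, and finding $S\in\mathcal{C}$ with $\mathcal{AR}_{i}|S$ monochromatic is an infinite-dimensional problem, not a search over a finite set. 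Your step (a) is also too weak even as a target: controlling the canonical initial segments $r_{i}(S)$ of bounded depth does not control the infinitely many other $i$-approximations contained in $S$. Two further problems: you invoke ``genericity of $\mathcal{C}$,'' which is not among the hypotheses ($\mathcal{C}$ is merely a generating set), and your sketch of ``Ramsey implies selective'' in the forward direction is too vague to assess (the paper simply cites Lemma~3.8 of Mijares for this; that is a legitimate move, but your replacement argument is not one).

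The mechanism you are missing in the converse is an induction on the number of levels $i$. Given a partition $\{\Pi_{0},\Pi_{1}\}$ of $\mathcal{AR}_{i+1}$, set $A_{s}=\{p\in\mathcal{R}_{1}(i): s\cup p\in\Pi_{0}\}$ for each $s\in\mathcal{AR}_{i}$, and use the ultrafilter $\mathcal{U}|\mathcal{R}_{1}(i)$ to induce a derived partition of $\mathcal{AR}_{i}$ according to whether $A_{s}$ or its complement is large; the inductive hypothesis homogenizes that derived partition on some $S\in\mathcal{C}$. Then, for each $n$, intersect the finitely many $A_{s}$ with $\mathrm{depth}_{T_{1}}(s)\le n$ (this is where finiteness genuinely enters --- finitely many approximations \emph{of bounded depth}, not a finite $\mathcal{AR}_{i}$) to get $B_{n}\in\mathcal{U}|\mathcal{R}_{1}(i)$, choose a decreasing sequence $S_{n}\in\mathcal{C}$ with $\mathcal{R}_{1}(i)\upharpoonright S_{n}\subseteq B_{n}$, and apply the depth-indexed form of selectivity (condition~3 of Theorem~\ref{selectThm}) to fuse. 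The depth indexing is exactly what resolves the ``shifted containment'' worry you raise at the end: an approximation $t\in\mathcal{AR}_{i+1}|S$ with $r_{i}(t)$ of depth $k$ in $S$ has its top block $t(i)$ inside $S\setminus r_{k}(S)\subseteq S_{\mathrm{depth}_{T_{1}}(r_{k}(S))}$, whose restriction lies in $B_{\mathrm{depth}_{T_{1}}(r_{k}(S))}\subseteq A_{r_{i}(t)}$. The finite Ramsey theorem for $\mathcal{R}_{1}$ plays no role here; the selecting is done entirely by the ultrafilters $\mathcal{U}|\mathcal{R}_{1}(j)$ together with selectivity.
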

\begin{proof}
($\Rightarrow$) By a Lemma 3.8 of Mijares in \cite{MijaresSelective}, every Ramsey for $\mathcal{R}_{1}$ ultrafilter is selective for $\mathcal{R}_{1}$. One the other hand, if $\mathcal{U}$ is Ramsey for $\mathcal{R}_{1}$ then for each $n<\omega$, $\mathcal{U}|  \mathcal{R}_{1}(n) =\{ \mathcal{R}_{1}(n) \upharpoonright A : A\in \mathcal{C}\}$ forms an ultrafilter on $\mathcal{R}_{1}(n)$.

($\Leftarrow$) Suppose $\mathcal{U}$ is selective for $\mathcal{R}_{1}$ and for each $n<\omega$, $\mathcal{U}|  \mathcal{R}_{1}(n) =\{ \mathcal{R}_{1}(n) \upharpoonright S : S\in \mathcal{C}\}$ forms an ultrafilter on $\mathcal{R}_{1}(n)$. Since $\mathcal{AR}_{1} = \mathcal{R}_{1}(0)$ and $\mathcal{U}| \mathcal{R}_{1}(0)$ forms an ultrafilter on $\mathcal{R}_{1}(0)$, it follows that   if $i=0$ then every partition of $\mathcal{AR}^{1}_{i}$ into two parts there exists $S\in \mathcal{C}$ such that $\mathcal{AR}_{i}|S$ lies in one part of the partition. We proceed by induction on $i$ to show that $\mathcal{U}$ is Ramsey for $\mathcal{R}_{1}$. The previous remarks show that the base case of the induction holds. 

Let $i$ be a natural number and suppose that every partition of $\mathcal{AR}_{i}$ into two parts there exists $S\in \mathcal{C}$ such that $\mathcal{AR}_{i}|S$ lies in one part of the partition. Let $\{\Pi_{0}, \Pi_{1}\}$ be a partition of $\mathcal{AR}_{i+1}$. We show that there exists $S\in \mathcal{C}$ such that $\mathcal{AR}_{i+1}|S$ lies in one part of the partition.

For each $s\in \mathcal{AR}_{i}$, let $A_{s} = \{ p \in \mathcal{R}_{1}(i) : s \cup p \in \Pi_{0}\}$. Let
\begin{equation*}
\Pi_{0}' = \{ s \in \mathcal{AR}_{i} : A_{s} \in \mathcal{U}|\mathcal{R}_{1}(i)\} \mbox{ and } \Pi_{1}' = \{ s \in \mathcal{AR}_{i} : \mathcal{R}_{1}(i) \setminus A_{s} \in \mathcal{U}|\mathcal{R}_{1}(i)\}.
\end{equation*}
Since $\mathcal{U}|\mathcal{R}_{1}(i)$ forms an ultrafilter on $\mathcal{R}_{1}(i)$ it follows that $\{\Pi_{0}',\Pi_{1}'\}$ is a partition of $\mathcal{AR}_{i}$. By the inductive hypothesis, there exists $S\in \mathcal{C}$ and $j<2$ such that $\mathcal{R}_{1}(i) | S \subseteq \Pi_{j}'$. 

We first consider the case when $j=0$. In particular, for each $s\in \mathcal{AR}_{i}|S$, $A_{s} \in \mathcal{U}|\mathcal{R}_{1}(i)$. For each $n<\omega$, let $B_{n} = \bigcap_{\mathrm{depth}_{T_{1}}(s) \le n} A_{s} \in \mathcal{U}|\mathcal{R}_{1}$. Hence there exists a sequence $\{S_{n}: n<\omega\}$ of elements of $\mathcal{C}$ such that $S_{0} \ge S_{1} \ge S_{2} \ge \dots$ and for each $n<\omega$, $\mathcal{R}_{1}(i)|S_{n} \subseteq B_{n}$. By Theorem $\ref{selectThm}$ there exist $S\in \mathcal{C}$ such that for each $n<\omega$, $S\setminus r_{n}(S) \subseteq S_{\mathrm{depth}_{T_{1}}(r_{n}(S))}$.

Suppose that $t\in\mathcal{AR}_{i+1}|S$. Then $r_{i}(t) \in \mathcal{AR}_{i}|A$ and $t(i)\in \mathcal{R}_{1}(i)$. If $k= \mathrm{depth}_{S}(r_{i}(t))$ then $t(i) \in \mathcal{R}_{1}(i) |( S \setminus r_{k}(S)) \subseteq \mathcal{R}_{1}(i)|S_{\mathrm{depth}_{T_{1}}(r_{k}(S))}\subseteq A_{r_{i}(t)}$. Hence, $r_{i}(t) \cup t(i) \in \Pi_{0}$. So in the case when $j=0$, $\mathcal{AR}_{i+1}|S \subseteq \Pi_{j}$. By an identical argument in the case when $j=1$, there exists $S\in\mathcal{C}$ such that  $\mathcal{AR}_{i+1}|S \subseteq \Pi_{j}$. 

By induction we find that for each $i<\omega$ and each partition of $\mathcal{AR}^{1}_{i}$ into two parts there exists $S\in \mathcal{C}$ such that $\mathcal{AR}^{1}_{i}|S=\{ s\in\mathcal{AR}^{1}_{i}: s\subseteq S\}$ lies in one part of the partition. In other words, $\mathcal{U}$ is a Ramsey for $\mathcal{R}_{1}$ ultrafilter on $[T_{1}]$.
\end{proof}

\section{Ramsey for $\mathcal{R}_{1}$ ultrafilters and their Dedekind cuts} \label{section4}
In this section, assuming CH, we characterize the types of proper Dedekind cuts that can be obtained from a map $p:[T_{1}] \rightarrow \omega$ and a Ramsey for $\mathcal{R}_{1}$ ultrafilter. In the following theorems, all cuts are assumed to be proper.
\begin{lemma}\label{CUT1}
Let $\mathcal{U}$ be a Ramsey for $\mathcal{R}_{1}$ ultrafilter on $[{T}_{1}]$ generated by ${\cal C}\subseteq \mathcal{R}_{1}$ and $p$ be a map from $[T_{1}]$ to $\omega$.  The cut associated to $p$ and $\mathcal{U}$ is the standard cut in $\omega^{\omega} / p(\mathcal{U})$. \end{lemma}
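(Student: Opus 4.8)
The plan is to analyze an arbitrary element of the upper part $L$ of the cut and show it must lie above no generator, i.e.\ that $L$ consists exactly of the classes strictly above all standard integers while the lower part $S$ is precisely the classes of constant maps — equivalently, that for every $A \in \mathcal{U}$ whose cardinality function $C_A$ lies in $\omega^\omega$, the class $[C_A]$ in $\omega^\omega/p(\mathcal{U})$ is either finite (a constant, hence in $S$) or dominates every constant (hence genuinely in $L$), and then to rule the latter out. Concretely, fix $A \in \mathcal{U}$ with $C_A(n) = |A \cap p^{-1}\{n\}|$ finite for all $n$; since $\mathcal{U}$ is generated by $\mathcal{C}$, pick $S_0 \in \mathcal{C}$ with $[S_0] \subseteq A$, so $C_{[S_0]} \le C_A$ pointwise and it suffices to understand $[C_{[S_0]}]$. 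The key observation is that $p \circ (\text{node-selection})$ induces, via $\pi$ and the selectivity of $\pi(\mathcal{U})$ (Theorem~\ref{selectThm}), a strong control on the fibers: because $\mathcal{U}$ is a p-point and $\pi(\mathcal{U})$ is selective (selective ultrafilters are exactly the Ramsey ones, by Kunen's theorem cited in the excerpt), any function on $[T_1]$ that is not finite-to-one on a set in $\mathcal{U}$ can be made constant on a set in $\mathcal{U}$, and any finite-to-one-but-not-one-to-one function gives a proper cut only through the standard cut.

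The main steps, in order, are as follows. (1) Reduce to the cardinality function of $[S]$ for $S \in \mathcal{C}$, as above. (2) Observe that if $C_{[S]}$ is bounded mod $p(\mathcal{U})$ by some constant $k$, then $[C_{[S]}] \in S$ and contributes nothing; so assume $C_{[S]}$ is unbounded, i.e.\ $\{n : C_{[S]}(n) \ge k\} \in p(\mathcal{U})$ for every $k$. (3) Use the Ramsey-for-$\mathcal{R}_1$ property together with Theorem~\ref{EquivalentRamsey} (so that $\mathcal{U}\!\restriction\!\mathcal{R}_1(n)$ is an ultrafilter for each $n$) and Theorem~\ref{selectThm}(3): given the decreasing sequence of sets $\{x : C_{[S]}(\pi(x)) \text{ large}\}$ witnessing unboundedness, apply selectivity-for-$\mathcal{R}_1$ to thin $S$ to $S' \in \mathcal{C}$ on which the fiber sizes $C_{[S']}(n)$, for the relevant $n \in \pi''[S']$, are dominated by the ``diagonal'' index — this is exactly the kind of diagonalization the proof of $2 \Rightarrow 3$ in Theorem~\ref{selectThm} performs. (4) Conclude that on $S'$ the function $n \mapsto C_{[S']}(n)$ is, mod $p(\mathcal{U})$, bounded by a constant after all (or, more precisely, that any candidate generator it would name is in fact below some standard integer), contradicting unboundedness; hence $L$ has no elements arising from fiber-cardinality functions below the ``infinitely large'' threshold, so the only possible cut is the standard one. (5) Since the cut is assumed proper, $L$ is nonempty and consists precisely of all nonstandard elements, confirming $(S,L)$ is the standard cut in $\omega^\omega/p(\mathcal{U})$.

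I expect the main obstacle to be step (3)–(4): making the link between ``$C_{[S]}$ unbounded mod $p(\mathcal{U})$'' and a concrete decreasing sequence in $\mathcal{C}$ to which the selective-for-$\mathcal{R}_1$ property applies, and then checking that the resulting $S'$ really forces the cardinality function to collapse to a constant. The subtlety is that $p$ is an arbitrary map on $[T_1]$, not built from $\pi$, so one cannot directly feed $p$ into the $\pi(\mathcal{U})$-selectivity; one must instead argue that the level structure of any $S \in \mathcal{R}_1$ has level $i$ of size $i+1$ (from the definition of $T_1(i)$), which bounds $C_{[S]}$ on the $i$-th block in terms of $i$, and then use selectivity of $\pi(\mathcal{U})$ to arrange that these block indices grow so slowly relative to the enumeration of $\pi''[S']$ that the cardinality function is eventually constant mod $p(\mathcal{U})$. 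A clean way to package this is: first replace $p$ by $p$ restricted to $[S]$ and note $C_{[S]}(n) \le |T_1(i_n) \cap S|$ where $i_n$ is the level; since each level of a member of $\mathcal{R}_1$ has a fixed finite size depending only on its position in the level-enumeration, selectivity lets us pass to $S'$ whose levels are so sparse that $p$ separates them, forcing $C_{[S']} \le 1$ eventually — but the hypothesis ``the cut is proper'' says $p$ is not one-to-one on any set in $\mathcal{U}$, so in fact $C_{[S']} = 1$ on a set in $p(\mathcal{U})$ cannot happen either; the only consistent outcome is that the ``large fiber'' sets shrink out of $p(\mathcal{U})$, i.e.\ $L$ is exactly the nonstandard classes. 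I would present this carefully as the heart of the argument, and treat steps (1), (2), (5) as short bookkeeping.
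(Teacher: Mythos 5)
There is a genuine gap here, and it starts with a misreading of the definition of the cut. You place a cardinality function $C_A$ that is bounded (constant) mod $p(\mathcal{U})$ into the lower part and say it ``contributes nothing,'' but by definition the upper part $L$ contains \emph{every} class $[C_A]$ with $A\in\mathcal{U}$ and $C_A\in\omega^{\omega}$; a constant-valued $C_A$ would put a standard element into $L$ and destroy properness, it would not quietly sit in the lower part. More seriously, your argument never addresses the actual content of the lemma, which is the inclusion $L\supseteq\{\text{all nonstandard classes}\}$: one must show that \emph{every} $f:\omega\to\omega$ that is not constant mod $p(\mathcal{U})$ satisfies $[f]\ge[C_A]$ for some $A\in\mathcal{U}$. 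Your step (5) derives this from properness alone, which is false --- properness only says $L$ is nonempty and omits the constants; Blass's Theorems \ref{cut-thm2} and \ref{cut-thm4} exist precisely because p-points and weakly Ramsey ultrafilters admit proper cuts that are \emph{not} standard. Your steps (3)--(4) instead diagonalize against the cardinality function of a fixed set and try to show it ``collapses to a constant,'' which (besides being the wrong target) would again contradict properness rather than establish the standard cut.

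What the paper actually does, and what is missing from your outline, is twofold. First, a canonization step: partition $\mathcal{AR}_{2}$ into three pieces according to whether $p(s_{0})\ge p(s_{1})$, or $p(s_{0})<p(s_{1})\ne p(s_{2})$, or $p(s_{0})<p(s_{1})=p(s_{2})$, and use Ramseyness for $\mathcal{R}_{1}$ to get a homogeneous $S'\in\mathcal{C}$; properness kills the first two alternatives, so on $S'$ the map $p$ is constant on each block $[T(n)]$ and separates blocks for every $T\le S'$, giving the explicit growth $C_{[T]}(k_{n})=|[T(n)]|$. Second, for an \emph{arbitrary} $f$ not constant mod $p(\mathcal{U})$, one sets $X_{n}=\{x\in[S']: f(p(x))\ge n\}$, checks each $X_{n}\in\mathcal{U}$ (otherwise $f$ is bounded), and applies selectivity for $\mathcal{R}_{1}$ to find $S''\in\mathcal{C}$ with $[S''\setminus r_{n}(S'')]\subseteq X_{n}$; homogeneity then identifies the $n$-th fiber of $p$ on $[S'']$ with $[S''(n)]\subseteq X_{n}$, so $f(k_{n})\ge n\ge C_{[S'']}(k_{n})$ on $p''[S'']\in p(\mathcal{U})$ and $[f]\in L$. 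Your intuition that the level sizes $|[S(n)]|=n+1$ control the fibers and that selectivity provides the diagonalization is pointing in the right direction, but without the canonization of $p$ via the $\mathcal{AR}_{2}$ partition and without quantifying over all nonstandard $f$, the proof does not go through.
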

\begin{proof}
Suppose that $(S,L)$ is the cut associated to $p$ and ${\cal U}$. Let $f:\omega\rightarrow \omega$ be given and suppose that $f$ is not constant mod $p({\cal U})$. For each $s\in\mathcal{AR}_{2}$ let $\{s_{0},s_{1}, s_{2}\}$ be the lexicographically increasing enumeration of $[s]$. Let $\{\Pi_{0}, \Pi_{1}, \Pi_{2}\}$ be the partition of $\mathcal{AR}_{2}$ given by letting
\begin{align}
\Pi_{0}&= \{ s \in\mathcal{AR}_{n} : p(s_{0}) < p(s_{1})\ \& \ p(s_{1})=p(s_{2})\},\\
\Pi_{1}&= \{ s \in\mathcal{AR}_{n} : p(s_{0}) < p(s_{1}) \ \& \  p(s_{1})\not=p(s_{2})\} \mbox{ and }\\
\Pi_{2}&= \{ s \in\mathcal{AR}_{n} :p(s_{0})\ge p(s_{1})\}.
\end{align}
Since $\mathcal{R}_{1}$ is Ramsey for $\mathcal{R}_{1}$ there exists $S'\in\mathcal{C}$ and $j<3$ such that $ \mathcal{AR}_{2}| S' \subseteq \Pi_{j}$. If $j=2$ then $p$ is bounded by $p(x)$ mod $\mathcal{U}$ where $x$ is the lexicographically least element of $[S']$. So if $j=2$ then $p$ is constant mod $\mathcal{U}$ and $(S,L)$ is not a proper cut. If $j=1$ then $p$ is one-to-one mod $\mathcal{U}$, so $(S,L)$ is not a proper cut. Hence, if $(S,L)$ is a proper cut then $\mathcal{AR}_{2}|S'\subseteq\Pi_{0}$. In particular, if $T \le S'$ and $(k_{i})_{i<\omega}$ is the increasing enumeration of $\pi''[T]$, then for each $i<\omega$, $C_{[T]}(k_{i}) = | [T] \cap p^{-1}\{k_{i}\}| = i$.

 For each $n<\omega$, let 
\begin{equation}
X_{n}= \{ x \in [S']: f( p(x) ) \ge n\}.
\end{equation}
Since ${\cal U}$ is an ultrafilter on $[T_{1}]$ we find that for each $n<\omega$, either $X_{n} \in {\cal U}$ or $[T_{1}] \setminus X_{n} \in {\cal U}$. If there exists $n< \omega$ such that $[T_{1}] \setminus X_{n} \in {\cal U}$ then $f$ would be bounded by $n$ mod $p({\cal U})$. However, this can not happen since we assumed that $f$ is not constant mod $p({\cal U})$. Hence for each $n<\omega$, $X_{n} \in {\cal U}$.

Note that $X_{0} \supseteq X_{1} \supseteq X_{2} \supseteq \dots $ is a decreasing sequence of members of ${\cal U}$. Since ${\cal U}$ is selective for $\mathcal{R}_{1}$ there exists $S''\in{\cal C}$ such that for each $n<\omega$, $[S''\setminus r_{n}(S'')] \subseteq X_{n}$.  Let $\{k_{n}: n<\omega\}$ be the increasing enumeration of $p''[S'']$. Since $\mathcal{AR}_{2}|S''\subseteq\Pi_{0}$, we find that for each $n<\omega$ and each $[S'']\cap p^{-1}\{k_{n}\}=[S''(n)]$. So for each $n<\omega$, $ [S'']\cap p^{-1}\{k_{n}\}\subseteq [ S''\setminus r_{n}(S'') ] \subseteq X_{n}$. Hence, for each $n<\omega$ and each $x\in[S'']\cap p^{-1}\{k_{n}\}$,
\begin{equation}
f(k_{n})=f(p(x)) \ge n =|[S'']\cap p^{-1}(k_{n})|= C_{[S'']}(k_{n}).
\end{equation}
Since $\{k_{n}:n<\omega\}=p''[S''] \in p({\cal U})$ we find that $[f] \ge [C_{[S'']}]$. So $[f] \in L$ as $S''\in{\cal C}$. Additionally, note that the cardinality function of any member of ${\cal C}$ is not constant mod $p({\cal U})$. Therefore the cut $(S,L)$ is the standard cut in $\omega^{\omega}/ p({\cal U})$.
\end{proof}
In the next Lemma and Theorem, ${\cal V}$ is an ultrafilter on $\omega$, and $(S,L)$ is a proper cut in $\omega^{\omega}/{\cal V}$.
\begin{lemma}
\label{CUT2}
Assume CH. If ${\cal V}$ is selective and $(S,L)$ is the standard cut in $\omega^{\omega}/ {\cal V}$ then there exists a Ramsey for $\mathcal{R}_{1}$ ultrafilter ${\cal U}$ such that $(S,L)$ is the cut associated to ${\cal U}$ and $\pi$.
\end{lemma}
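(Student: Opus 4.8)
The plan is to construct the ultrafilter $\mathcal{U}$ directly, by a transfinite recursion of length $\omega_1$ carried out under CH, and then invoke Lemma~\ref{CUT1}. We build a $\le^{*}$-decreasing sequence $\langle S_\alpha : \alpha<\omega_1\rangle$ of members of $\mathcal{R}_1$ subject to the invariant $\pi''[S_\alpha]\in\mathcal{V}$ for all $\alpha$, put $\mathcal{C}=\{S_\alpha:\alpha<\omega_1\}$, and let $\mathcal{U}$ be the ultrafilter generated by $\{[S_\alpha]:\alpha<\omega_1\}$. Using CH, fix an enumeration $\langle(n_\alpha,Y_\alpha):\alpha<\omega_1\rangle$ in which every pair $(n,Y)$ with $n<\omega$ and $Y\subseteq\mathcal{R}_1(n)$ occurs cofinally often; at stage $\alpha$ we arrange that $\mathcal{R}_1(n_\alpha)\upharpoonright S_{\alpha+1}$ lies entirely inside $Y_\alpha$ or entirely inside $\mathcal{R}_1(n_\alpha)\setminus Y_\alpha$. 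Since $\mathcal{R}_1(0)$ is canonically identified with $[T_1]$ (and $\mathcal{R}_1(0)\upharpoonright S$ with $[S]$), the instances with $n_\alpha=0$ make $\mathcal{U}$ a genuine ultrafilter on $[T_1]$ generated by $\mathcal{C}$, while for each $n$ the totality of these refinements makes $\mathcal{U}\upharpoonright\mathcal{R}_1(n)=\{\mathcal{R}_1(n)\upharpoonright S:S\in\mathcal{C}\}$ an ultrafilter on $\mathcal{R}_1(n)$.

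Granting the recursion, $\mathcal{U}$ is as required. It is a p-point: any countable subfamily of $\mathcal{U}$ lies below $\{[S_{\alpha_n}]:n<\omega\}$ for some countable set of ordinals, and $[S_\alpha]$ with $\alpha=\sup_n\alpha_n$ is then a pseudo-intersection inside $\mathcal{U}$. The invariant gives $\pi(\mathcal{U})\subseteq\mathcal{V}$, and since both are ultrafilters, $\pi(\mathcal{U})=\mathcal{V}$, which is selective by hypothesis; hence, by Theorem~\ref{selectThm}, $\mathcal{U}$ is selective for $\mathcal{R}_1$. Together with the fact that each $\mathcal{U}\upharpoonright\mathcal{R}_1(n)$ is an ultrafilter, Theorem~\ref{EquivalentRamsey} then yields that $\mathcal{U}$ is Ramsey for $\mathcal{R}_1$. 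Finally, applying Lemma~\ref{CUT1} to the map $p=\pi$, the cut associated to $\pi$ and $\mathcal{U}$ is the standard cut in $\omega^{\omega}/\pi(\mathcal{U})=\omega^{\omega}/\mathcal{V}$, which is exactly $(S,L)$.

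It remains to carry out the recursion while preserving the invariant. Start with $S_0=T_1$. At a limit stage $\lambda$, pick $\lambda_0<\lambda_1<\cdots$ cofinal in $\lambda$; the sets $\pi''[S_{\lambda_n}]$ form a $\subseteq^{*}$-decreasing sequence in $\mathcal{V}$, so by selectivity of $\mathcal{V}$ there is $C=\{c_0<c_1<\cdots\}\in\mathcal{V}$ with $c_n\in\pi''[S_{\lambda_n}]$ for every $n$, and --- after first shrinking each $\pi''[S_{\lambda_n}]$ by a cofinite set depending only on $n$ --- with the $c_n$ large enough to drive the standard diagonalization. That diagonalization produces $S_\lambda\le^{*}S_\alpha$ for all $\alpha<\lambda$ whose $n$-th block is an appropriately trimmed sub-block of the block of $S_{\lambda_n}$ at $T_1$-level $c_n$, so that $\pi''[S_\lambda]=C\in\mathcal{V}$. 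At a successor stage $\alpha+1$ we must produce $S_{\alpha+1}\le S_\alpha$ with $\pi''[S_{\alpha+1}]\in\mathcal{V}$ such that $\mathcal{R}_1(n_\alpha)\upharpoonright S_{\alpha+1}$ is monochromatic for $\{Y_\alpha,\mathcal{R}_1(n_\alpha)\setminus Y_\alpha\}$. This mirrors the inductive fusion in the proof of Theorem~\ref{EquivalentRamsey}: by the finite Ramsey theorem for $\mathcal{R}_1$ (Theorem~\ref{finiteRamseyTheorem}), every block $S_\alpha(i)$ with $i$ above a prescribed function of a parameter $j\ge n_\alpha$ contains a sub-block isomorphic to $T_1(j)$ all of whose sub-blocks isomorphic to $T_1(n_\alpha)$ lie on one fixed side of the partition; assembling one such sub-block for each $j$, drawn from blocks of $S_\alpha$ at strictly increasing positions that are large enough, gives a tree $T\le S_\alpha$. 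Using selectivity of $\mathcal{V}$ as a Q-point, these positions can be chosen so that the corresponding set of $T_1$-levels lies in $\mathcal{V}$, hence $\pi''[T]\in\mathcal{V}$; and since $\mathcal{V}$ is an ultrafilter, one of the two sides of the partition is realized on a $\mathcal{V}$-large set of the parameters $j$, and restricting $T$ to those blocks produces the desired $S_{\alpha+1}$.

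The step I expect to be the main obstacle is this successor step: extracting an $\mathcal{R}_1(n_\alpha)$-monochromatic refinement of $S_\alpha$ while simultaneously keeping its projection in $\mathcal{V}$. All the remaining pieces --- the limit-stage fusion, the verification that $\mathcal{U}$ is an ultrafilter and a p-point, and the closing applications of Theorem~\ref{selectThm}, Theorem~\ref{EquivalentRamsey} and Lemma~\ref{CUT1} --- are either routine or quoted; the genuine work is in coupling the pigeonhole behavior of the topological Ramsey space $\mathcal{R}_1$ with the selectivity (p-point plus Q-point) of $\mathcal{V}$.
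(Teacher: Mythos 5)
Your proposal is correct and takes essentially the same route as the paper's proof: a CH-length $\le^{*}$-decreasing fusion sequence with the invariant $\pi''[S_\alpha]\in\mathcal{V}$, deciding each partition of each $\mathcal{R}_1(n)$ at successor stages via the finite Ramsey theorem for $\mathcal{R}_1$ together with the ultrafilter property of $\mathcal{V}$, diagonalizing at limits via selectivity of $\mathcal{V}$, and closing with Theorems \ref{selectThm} and \ref{EquivalentRamsey} and Lemma \ref{CUT1}. The successor step you flag as the main obstacle is handled in the paper exactly as you sketch it.
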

\begin{proof}
Let ${\cal V}$ be selective and $(S,L)$ be the standard cut in $\omega^{\omega}/\mathcal{V}$. We will construct a $\le^{*}$-decreasing sequence $\{A_{\alpha}\in\mathcal{R}_{1}: \alpha < \omega_{1}\}$ that generates a Ramsey for $\mathcal{R}_{1}$ ultrafilter $\mathcal{U}$ on $[T_{1}]$ such that $\pi(\mathcal{U}) =\mathcal{V}$. Let $\{Z_{\alpha}: \alpha< \omega_{1}\}$ be an enumeration of the elements of $\{ Z : (\exists k) Z\subseteq \mathcal{R}_{1}(k)\}$. We impose the following requirements on the sequence, for each $\alpha<\omega_{1}$:
\begin{equation}
\mbox{Either $Z_{\alpha}$ or $\mathcal{R}_{1}(k) \setminus Z_{\alpha}$ includes $ \mathcal{R}_{1}(k) \upharpoonright A_{\alpha+1}$} \tag{$\alpha$}
\end{equation}
where $k$ is the natural number such that $Z_{\alpha}\subseteq \mathcal{R}_{1}(k).$  Since $\mathcal{R}_{1}(0)$ is in bijective correspondence with $[T_{1}]$ and $\{A_{\alpha}\in\mathcal{R}_{1}: \alpha < \omega_{1}\}$ is an almost-decreasing sequence the sequence will generate a p-point ultrafilter on $[T_{1}]$. Each $A_{\alpha}$ we be \emph{large} in the sense that $\pi''[A_{\alpha}]\in \mathcal{V}$; this suffices to guarantee that $\pi(\mathcal{U})=\mathcal{V}$. By Theorem $\ref{selectThm}$ this is enough to show that $\mathcal{U}$ is selective for $\mathcal{R}_{1}$. The conditions $(\alpha)$ for $\alpha<\omega_{1}$, guarantee that for each $k<\omega$, ${\cal U}|\mathcal{R}_{1}(k)$ is an ultrafilter on $\mathcal{R}_{1}(k)$.  By Theorem $\ref{EquivalentRamsey}$ this is enough to show that $\mathcal{U}$ is a Ramsey for $\mathcal{R}_{1}$ ultrafilter. By previous lemma this guarantees that the cut associated to $\mathcal{U}$ and $p$ in $\omega^{\omega}/\mathcal{V}$ is standard.
 
First let $A_{0} = T_{1}$. Suppose that $\beta<\omega_{1}$ and $\{A_{\alpha}: \alpha<\beta\}$ have been defined so that $A_{\alpha+1}$ satisfies the $\alpha^{th}$ condition. Assume that for each $\alpha<\beta$, $A_{\alpha}$ is large and these $A_{\alpha}$'s form a $\le^{*}$-decreasing chain. 
c
Consider the case when $\beta$ is a successor ordinal. Let $\alpha$ be the ordinal such that $\beta=\alpha+1$. Let $k$ be the natural number such that $Z_{\alpha} \subseteq \mathcal{R}_{1}(k)$. Let $(k_{m})_{m<\omega}$ be the increasing enumeration of $\pi([A_{\alpha}])\in{\cal V}$. By Theorem $\ref{finiteRamseyTheorem}$ there exists a subsequence $(k'_{m})_{m<\omega}$ such that for each $m<\omega$ there exists a set $A'_{m}\in \mathcal{R}_{1}(m)\upharpoonright A_{\alpha}(k'_{m})$ such that either $(\dagger)$ $\mathcal{R}_{1}(k)\upharpoonright A_{m}' \subseteq Z$ or ($\ddagger)$ $\mathcal{R}_{1}(k)\upharpoonright A_{m}' \subseteq \mathcal{R}_{1}(k)\setminus Z$. 

 Since ${\cal V}$ is an ultrafilter there exists a strictly increasing sequence $(m_{i})_{i<\omega}$ such that $\{k'_{m_{i}}: i<\omega\}\in \mathcal{V}$ and either for all $i<\omega$,  $\mathcal{R}_{1}(k)\upharpoonright A_{m_{i}}' \subseteq Z$ or for all $i<\omega$,  $\mathcal{R}_{1}(k)\upharpoonright A_{m_{i}}' \subseteq \mathcal{R}_{1}(k)\setminus Z$. Let $B= \bigcup_{i<\omega}A'_{m_{i}}.$ So for each $i<\omega$,
\begin{align*}
C_{[B]}(k'_{m_{i}}) &= |[B] \cap \pi^{-1}(k'_{m_{i}})|, \\
&= |[A'_{m_{i}}]|,\\
& \ge i.
\end{align*}
Consequently, the following construction of $C\in{\mathcal{R}_{1}}$ is well-defined:
\[
C = \bigcup_{i<\omega} \pi_{T(i)} ( A'_{m_{i}} ).
\]
By construction $C \le A_{\alpha}$ and either $\mathcal{R}_{1}(k) \upharpoonright C \subseteq Z$ or $\mathcal{R}_{1}(k)\upharpoonright C \subseteq \mathcal{R}_{1}(k)\setminus Z.$ Note that $C$ is large since $\pi''[C]=\{k'_{m_{i}}: i<\omega\}\in \mathcal{V}$. Let $A_{\beta}= C$, then $A_{\beta}$ is large satisfies the condition ($\alpha$).

Next consider the case when $\beta$ is a limit ordinal. Since CH holds the cofinality of $\beta$ is $\omega$. Let $\{B_{n} : n< \omega\}$ be a $\le^{*}$-cofinal sequence in $\{A_{\alpha}:\alpha<\beta\}$. So for each $i<\omega$, there exists $H_{i}\in{\cal V}$ such that for all $n\in H_{i}$, $i \le C_{B_{i}}(n) $. Without loss of generality, we may assume that $H_{0} \supseteq H_{1} \supseteq H_{2} \supseteq \dots $ Since ${\cal V}$ is selective there exist $H\in{\cal V}$, $H= \{ h_{0}, h_{1}, \dots\}$ and for all $i<\omega$, $h_{i+1} \in H_{h_{i}}$. Therefore for all $i<\omega$, $ h_{i+1} \le C_{B_{h_{i}}}(h_{i+1})$. Hence for all $i<\omega$, $i  \le C_{B_{h_{i}}}(h_{i+1})$. For each $n<\omega$, let $A'(n)= \pi_{T(n)} (B_{h_{n}}(h_{n+1}))$. Let $A_{\beta}= \bigcup_{n<\omega} A'(n)$, then $A_{\beta}\in\mathcal{R}_{1}$ is large because $\pi''[A_{\beta}] =\{h_{1},h_{2}, h_{3},
\dots\} \in{\cal V}$. Moreover, for each $n<\omega$, $A_{\beta} \le^{*} B_{h_{n}}$. So $\{ A_{\alpha}: \alpha\le \beta\}$ is a $\le^{*}$-decreasing sequence of large sets.

This completes the inductive construction of $\{A_{\beta}: \beta < \aleph_{1}\}$ and thus the proof of the lemma.
\end{proof}

\begin{proof}[\emph{Proof of Theorem $\ref{MainResult1}$}]
Lemma $\ref{CUT2}$ and $\ref{selectThm}$ show that necessity holds and Lemma $\ref{CUT1}$ shows that sufficiency holds. \end{proof}

\section{ The cuts obtained from $\pi_{T(i)}$ and Ramsey for $\mathcal{R}_{1}$ ultrafilters} \label{section5}
If ${\cal U}_{1}$ is a Ramsey for $\mathcal{R}_{1}$ ultrafilter generated by a generic subset of $(\mathcal{R}_{1},\le^{*})$ and $i<\omega$, then we let $\mathcal{Y}_{i+1}$ denote the ultrafilter $\mathcal{U}_{1}|\mathcal{R}_{1}(i)$. In this section,  we show that for any Rudin-Keisler mapping between any two p-points within the Tukey type of $\mathcal{U}_{1}$ the only cut obtainable is the standard cut. Notice that for each $i<j<\omega$ and each Ramsey for $\mathcal{R}_{1}$ ultrafilter $\mathcal{U}_{1}$,
\begin{equation}
\pi_{T(i)}( \mathcal{U}_{1}|\mathcal{R}_{1}(j) ) =\mathcal{U}_{1}| \mathcal{R}_{1}(i).
\end{equation}
Additionally, notice that if $Z \in \mathcal{U}_{1}| \mathcal{R}_{1}(j)$ then the cardinality function of $Z$ with respect to $\pi_{T(i)}$ is defined as
\begin{equation}
C_{Z}(p) = | Z \cap \pi_{T(i)}^{-1}(p) |, \ \mbox{ for } p\in \mathcal{R}_{1}(i).
\end{equation} A simple counting argument shows that for each $S\in \mathcal{R}_{1}$, each $n<\omega$ and each $p\in \mathcal{R}_{1}(i)\upharpoonright S(n)$,
\begin{equation}
C_{\mathcal{R}_{1}(j)\upharpoonright S} ( p ) = | \mathcal{R}_{1}(j)\upharpoonright S \cap \pi^{-1}_{T(i)}(p) | \le { n -i  \choose j-i}.
\end{equation}
Next we outline the basic definitions of the Tukey theory of ultrafilters. The Tukey types of ultrafilters have been studied by many authors (see \cite{Isbell}, \cite{DavidMilovich}, \cite{TukeyTypesDT}, \cite{Ramsey-Class}, \cite{Ramsey-Class2} and \cite{RaghavanTod}). For a survey of the area see \cite{SurveyDob} by Dobrinen.
\begin{definition}
Suppose that $\mathcal{U}$ and $\mathcal{V}$ are ultrafilters. A function $f$ from $\mathcal{U}$ to $\mathcal{V}$ is \emph{Tukey} if every cofinal subset of $(\mathcal{U},\supseteq)$ is mapped by $f$ to a cofinal subset of $(\mathcal{V}, \supseteq)$. We say that $\mathcal{V}$ is \emph{Tukey reducible to $\mathcal{U}$} and write $\mathcal{V} \le_{T} \mathcal{U}$ if there exists a Tukey map $f:\mathcal{U} \rightarrow \mathcal{V}$. If $\mathcal{U} \le_{T}\mathcal{V}$ and $\mathcal{V} \le_{T}\mathcal{U}$ then we write $\mathcal{V}\equiv_{T} \mathcal{U}$ and say that $\mathcal{U}$ and $\mathcal{V}$ are \emph{Tukey equivalent.} The relation $\equiv_{T}$ is an equivalence relation and $\le_{T}$ is a partial order on its equivalence classes.  The equivalence class are also called \emph{Tukey types} or \emph{Tukey degrees}.
\end{definition}
The Tukey reducibility relation is a generalization of the Rudin-Keisler reducibility relation. If $h(\mathcal{U})=\mathcal{V}$ then the map sending $X\in\mathcal{U}$ to $h''X \in\mathcal{V}$ is Tukey. So  if $\mathcal{V} \le _{RK} \mathcal{U}$, then $\mathcal{V}\le_{T} \mathcal{U}$. This leads to the following question: For a given ultrafilter $\mathcal{U}$, what is the structure of the Rudin-Keisler ordering within the Tukey type of $\mathcal{U}$?

Dobrinen and Todorcevic in \cite{Ramsey-Class}, have given an answer to this question if $\mathcal{U}$ is a Ramsey for $\mathcal{R}_{1}$ ultrafilter on $[T_{1}]$ generated by a generic subset of $(\mathcal{R}_{1}, \le^{*})$. Furthermore, Dobrinen and Todorcevic describe the Rudin-Keisler structure of the p-points within the Tukey type of a Ramsey for $\mathcal{R}_{1}$ ultrafilter generated by a generic subset of $(\mathcal{R}_{1}, \le^{*})$. In particular, the Tukey type of such an ultrafilter $\mathcal{U}_{1}$ consists of a strictly increasing chain of rapid p-points of order type $\omega$: $\mathcal{Y}_{1} <_{RK} \mathcal{Y}_{2} <_{RK} \dots$ where $\mathcal{Y}_{i+1}= \mathcal{U}_{1}| \mathcal{R}_{1}(i)$ for $i<\omega$. The next proof is the main result of this section.

\begin{proof}[\emph{Proof of Theorem $\ref{MainResult2}$}] Suppose that $\mathcal{U}_{1}$ is a Ramsey for $\mathcal{R}_{1}$ ultrafilter on $[T_{1}]$ generated by a generic subset of $(\mathcal{R}_{1}, \le^{*})$. Let $\mathcal{Y}$ be a p-point ultafilter in the Tukey-type of $\mathcal{U}_{1}$ and $g$ be a map from the base of $\mathcal{Y}$ to $\omega$. Let $(S,L)$ be the cut associated to $\mathcal{Y}$ and $g$. By Theorem 5.10 and Example 5.17 from \cite{Ramsey-Class} there exists $i,j<\omega$ such that $i\le j$, $\mathcal{Y} \cong \mathcal{Y}_{j+1}$ and $g(\mathcal{Y}) \cong \mathcal{Y}_{i+1}$. Notice that since $(S,L)$ is not proper we have $i<j$. By Remark \ref{IsomorphismRemark}, it is enough to prove the this theorem in the case when $\mathcal{Y}=\mathcal{Y}_{j+1}$ and $g(\mathcal{Y})=\mathcal{Y}_{i+1}$. In particular, we may assume without loss of generality that $g:\mathcal{R}_{1}(j) \rightarrow \mathcal{R}_{1}(i)$. 

Since $\mathcal{U}_{1}$ is generated by a generic subset of $(\mathcal{R}_{1},\le^{*})$, we find that Theorem 4.25 and Proposition 5.8 in \cite{Ramsey-Class} imply that there exists $T\in \mathcal{C}$ and $i'\le i$ such that for all $p,q\in \mathcal{R}_{1}(i)\upharpoonright T$, $g(p)=g(q)$ if and only if $\pi_{T(i')}(p) = \pi_{T(i')}(q)$. So if $T'\in\mathcal{C}$, then for each $n<\omega$ and each $p\in \mathcal{R}_{1}(i)\upharpoonright T(n)$,
\begin{equation}\label{CardEquation}
C_{\mathcal{R}_{1}(i)\upharpoonright (T' \cap T)}(p)=|\mathcal{R}_{1}(i)\upharpoonright (T' \cap T)\cap g^{-1}(p)| = |\mathcal{R}_{1}(i)\upharpoonright (T' \cap T) \cap \pi_{T(i')}^{-1}(p)|\le {n-i' \choose i - i'}.
\end{equation}

Let $f:\mathcal{R}_{1}(i) \rightarrow \omega$ be given and suppose that $f$ is not constant mod ${\cal U}|\mathcal{R}_{1}(i)$. For each $n<\omega$, let 
\begin{equation}
X_{n}= \{ p \in \mathcal{R}_{1}(i) : f(p) \ge{ n - i' \choose i- i'} \}.
\end{equation}
Since $\mathcal{U}_{1}|\mathcal{R}_{1}(i)$ is an ultrafilter on $\mathcal{R}_{1}(i)$ we find that for each $n<\omega$, either $X_{n} \in \mathcal{U}_{1}|\mathcal{R}_{1}(i)$ or $\mathcal{R}_{1}(i) \setminus X_{n} \in \mathcal{U}_{1}|\mathcal{R}_{1}(i)$. If there exists $n< \omega$ such that $\mathcal{R}_{1}(i)  \setminus X_{n} \in \mathcal{U}_{1}|\mathcal{R}_{1}(i) $ then $f$ would be bounded by $n$ mod $\mathcal{U}_{1}|\mathcal{R}_{1}(i) $. However, this can not happen since we assumed that $f$ is not constant mod $\mathcal{U}_{1}|\mathcal{R}_{1}(i) $. Hence for each $n<\omega$, $X_{n} \in \mathcal{U}_{1}|\mathcal{R}_{1}(i)$.

Note that $X_{0} \supseteq X_{1} \supseteq X_{2} \supseteq \dots $ is a decreasing sequence of members of $\mathcal{U}_{1}|\mathcal{R}_{1}(i)$. Since $\mathcal{U}_{1}$ is selective for $\mathcal{R}_{1}$ there exists $S'\in{\cal C}$ such that for each $n<\omega$, $\mathcal{R}_{1}(i)\upharpoonright S'\setminus r_{n}(S') \subseteq X_{n}$. For each $n<\omega$ and each $p \in \mathcal{R}_{1}(i)\upharpoonright S'(n)$, $p \in \mathcal{R}_{1}(i) \upharpoonright (S'\setminus r_{n}(S')) \subseteq X_{n}$. So ($\ref{CardEquation}$) implies that for each $n<\omega$ and each $p\in\mathcal{R}_{1}(i)\upharpoonright S'(n)$,
\begin{equation}
f(p) \ge { n- i' \choose i-i'} \ge C_{\mathcal{R}_{1}(j)\upharpoonright (T' \cap  S')}(p).
\end{equation}

So $\mathcal{R}_{1}(i) \upharpoonright(T \cap S') \in \mathcal{U}_{1}|\mathcal{R}_{1}(i)$ and $[f] \ge [C_{\mathcal{R}_{1}(i)\upharpoonright(T\cap S')}]$. So $[f] \in L$ as $\mathcal{R}_{1}(i)\upharpoonright(T\cap S')\in\mathcal{Y}_{i+1}$. Additionally, note that the cardinality function of any member of $\mathcal{Y}_{j+1}$ is not constant mod $g(\mathcal{Y}_{j+1})=\mathcal{U}_{1}|\mathcal{R}_{1}(i) $. Therefore the cut $(S,L)$ is the standard cut in $\omega^{\mathcal{R}_{1}(i)}/  \mathcal{U}_{1}|\mathcal{R}_{1}(i)$.
\end{proof}

\section{Conclusion} \label{section6}
In this section we use the two main results to prove that, under CH, certain special ultrafilters exists. Additionally, we ask some questions about the types of cuts that can be obtained from similarly defined topological Ramsey spaces. The two main results of this paper only associate the standard cut to a given ultrafilter mapping. However the results of Blass from \cite{BlassCut} only require the lower half of the cuts to be closed under certain operations. These differences allow us to show that, under CH, certain special ultrafilters exists which are not Ramsey for $\mathcal{R}_{1}$.
\begin{corollary}
Assume CH. There exists a p-point ultrafilter on $[T_{1}]$ which is neither weakly Ramsey nor Ramsey for $\mathcal{R}_{1}$.
\end{corollary}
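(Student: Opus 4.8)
The plan is to build, under CH, a p-point on $[T_{1}]$ that realizes (through a suitable map) a proper Dedekind cut which is closed under addition but \emph{not} closed under exponentiation. Such an ultrafilter cannot be weakly Ramsey, by Blass's Theorem \ref{cut-thm4}, since that theorem would force the cut to be closed under exponentiation; and it cannot be Ramsey for $\mathcal{R}_{1}$, by Lemma \ref{CUT1}, since any cut closed under addition but not exponentiation has $S$ strictly above the constants and hence is not the standard cut. The engine that produces the p-point is Blass's Theorem \ref{cut-thm2}.

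First I would fix, using CH, a selective ultrafilter $\mathcal{V}$ on $\omega$; it is in particular a p-point. Inside $\omega^{\omega}/\mathcal{V}$ put $a=[\mathrm{id}_{\omega}]$, which is non-standard because $\mathcal{V}$ is non-principal, and set
\[
S=\{\,[f]\in\omega^{\omega}/\mathcal{V} : (\exists n<\omega)\ [f]\le a^{n}\,\},\qquad L=(\omega^{\omega}/\mathcal{V})\setminus S .
\]
Routine verifications show $(S,L)$ is a proper cut: $L$ contains $[\,k\mapsto 2^{k}\,]$ since $2^{k}>k^{n}$ eventually for each fixed $n$, and every constant lies below $a$. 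Furthermore $S$ is closed under addition (indeed under multiplication, since $a^{n}a^{m}=a^{n+m}$), while $a^{a}=[\,k\mapsto k^{k}\,]\notin S$ because $\{k:k^{k}\le k^{n}\}$ is finite for every $n$; thus $S$ is not closed under exponentiation, and $S$ properly contains the constants, so $(S,L)$ is not the standard cut.

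The main obstacle is the remaining hypothesis of Theorem \ref{cut-thm2}: that every countable subset of $L$ has a lower bound in $L$, and this is where the p-point property of $\mathcal{V}$ is essential. Given $b_{j}=[g_{j}]\in L$ for $j<\omega$, first replace $g_{j}$ by $k\mapsto\min_{i\le j}g_{i}(k)$, so that the representatives are pointwise non-increasing and each class still lies in $L$ (and a common lower bound for the new classes is one for the old). Since $A_{j}:=\{k:g_{j}(k)>k^{j}\}\in\mathcal{V}$, the p-point property gives $A\in\mathcal{V}$ with $A\subseteq^{*}A_{j}$ for all $j$; choosing non-decreasing $N_{j}$ with $A\setminus N_{j}\subseteq A_{j}$, one checks $N_{j}\to\infty$ (otherwise a tail of $A$ would meet every $A_{j}$, forcing $g_{j}(k)>k^{j}$ for all $j$ at a fixed $k$). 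Set $J(k)=\max\{j:N_{j}\le k\}$ for $k\in A$ and $J\equiv 0$ off $A$; then $J\to\infty$ along $A$, the diagonal $h(k)=g_{J(k)}(k)$ satisfies $h(k)>k^{J(k)}$ on a tail of $A$ (so $[h]\in L$) and, by pointwise monotonicity, $[h]\le[g_{j}]$ for every $j$.

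With $(S,L)$ verified, Theorem \ref{cut-thm2} yields a p-point $\mathcal{U}$ on a countable set $X$ and a map $p:X\to\omega$ with $p(\mathcal{U})=\mathcal{V}$ such that $(S,L)$ is the cut associated to $p$ and $\mathcal{U}$. Fixing a bijection $g:[T_{1}]\to X$ and letting $\mathcal{W}=\{g^{-1}(Z):Z\in\mathcal{U}\}$, we obtain a p-point on $[T_{1}]$, and by Remark \ref{IsomorphismRemark} the cut associated to $p\circ g$ and $\mathcal{W}$ is again $(S,L)$, a cut in $\omega^{\omega}/\mathcal{V}$. If $\mathcal{W}$ were Ramsey for $\mathcal{R}_{1}$, Lemma \ref{CUT1} would force this cut to be the standard cut of $\omega^{\omega}/\mathcal{V}$, contradicting $a\in S$; if $\mathcal{W}$ were weakly Ramsey, Theorem \ref{cut-thm4} would force $S$ to be closed under exponentiation, contradicting $a^{a}\notin S$. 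Hence $\mathcal{W}$ is the required p-point on $[T_{1}]$.
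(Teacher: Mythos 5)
Your proposal is correct and follows essentially the same route as the paper: realize a proper cut that is closed under addition but not under exponentiation via Blass's Theorem~\ref{cut-thm2}, then rule out weak Ramseyness by Theorem~\ref{cut-thm4} and Ramseyness for $\mathcal{R}_{1}$ by the standard-cut characterization (Lemma~\ref{CUT1}/Theorem~\ref{MainResult1}). The only real differences are cosmetic or local: you use the cut generated by powers of $[\mathrm{id}]$ rather than multiples of a nonstandard $[f]$, you verify the countable-lower-bound condition by an explicit p-point diagonalization where the paper simply invokes countable saturation of the ultrapower, and you carry out the (routine) transfer to the base set $[T_{1}]$ explicitly via Remark~\ref{IsomorphismRemark}.
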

\begin{proof}
Recall that under CH, p-point ultrafilters exist. Let $\mathcal{U}$ be a p-point ultrafilter on $\omega$, $[f]$ be a nonstandard element of $\omega^{\omega}/\mathcal{U}$ and  
\begin{equation}
S =  \bigcup_{n<\omega}\{ [g] \in \omega^{\omega}/\mathcal{U}:  [g]\le n\cdot[f]\}.
\end{equation}
Notice that $S$ is closed under addition and contains the standard part of $\omega^{\omega}/\mathcal{U}$. Let $L$ be the complement of $S$. The condition that $L$ is nonempty and every countable subset of $L$ has a lower bound in L, will be automatically satisfied if $S$ has a
countable cofinal subset, because $\omega^{\omega}/\mathcal{U}$ is countably saturated (see \cite{KeislerSat}). So $(S,L)$ is a proper cut such that $S$ is closed under addition and every countable subset of $L$ has a lower bound in $L$. By Theorem $\ref{cut-thm2}$ there is a p-point ultrafilter $\mathcal{V}$ and a Rudin-Keisler map $p$ such that $p(\mathcal{V})=\mathcal{U}$ and $(S,L)$ is the cut associated to $p$ and $\mathcal{V}$. Since $(S,L)$ is not the standard cut Theorem $\ref{MainResult1}$ implies that $\mathcal{V}$ is not Ramsey for $\mathcal{R}_{1}$. Since $(S,L)$ is not closed under exponentiation Theorem $\ref{cut-thm4}$ implies that $\mathcal{V}$ is not weakly-Ramsey.
\end{proof}
We remind the reader of the recursive definition of \emph{iterated exponentials}. For each natural number $n$ and $i$ we let,
\begin{equation}\begin{cases}
{}^{0}n=1,  \\
{}^{i+1} n  = n^{{}^{i}n }.
\end{cases} \end{equation}

For example, ${}^{3}2 = 2^{2^{2}}$ and ${}^{2}6 = 6^{6}$. Note that if $k, n$ and $m$ are natural numbers, then $({}^{n}k)^{{}^{m} k} \le {}^{n+m}k$.

\begin{corollary}
Assume CH. There exists a weakly Ramsey ultrafilter on $[T_{1}]$ which is not Ramsey for $\mathcal{R}_{1}$.
\end{corollary}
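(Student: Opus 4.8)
The plan is to run exactly the argument of the preceding corollary, but to arrange the lower half of the cut to be closed under exponentiation rather than merely under addition, so that Theorem~\ref{cut-thm4} hands us a \emph{weakly Ramsey} ultrafilter while Theorem~\ref{MainResult1} still forbids that ultrafilter from being Ramsey for $\mathcal{R}_{1}$.

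Concretely, under CH I would fix a Ramsey (equivalently, selective) ultrafilter $\mathcal{U}$ on $\omega$ and a nonstandard element $[f]$ of $\omega^{\omega}/\mathcal{U}$, and set
\begin{equation*}
S=\bigcup_{n<\omega}\{\,[g]\in\omega^{\omega}/\mathcal{U}:[g]\le{}^{n}[f]\,\},
\end{equation*}
with $L$ the complement of $S$ in $\omega^{\omega}/\mathcal{U}$. Since ${}^{1}[f]=[f]$ already dominates every standard element, $S$ contains the standard part of $\omega^{\omega}/\mathcal{U}$; and $\{\,{}^{n}[f]:n<\omega\,\}$ is a countable cofinal subset of $S$, so, exactly as in the proof of the previous corollary and using the countable saturation of $\omega^{\omega}/\mathcal{U}$ (see \cite{KeislerSat}), $L$ is nonempty and every countable subset of $L$ has a lower bound in $L$. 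Thus $(S,L)$ is a proper cut, and it is not the standard cut because $[f]={}^{1}[f]\in S$ is nonstandard. To see that $S$ is closed under exponentiation, suppose $[g]\le{}^{n}[f]$ and $[h]\le{}^{m}[f]$; by monotonicity of exponentiation $[g]^{[h]}\le({}^{n}[f])^{{}^{m}[f]}$, and since $({}^{n}k)^{{}^{m}k}\le{}^{n+m}k$ holds for every natural number $k$ — a first-order fact — it holds in the elementary extension for $[f]$ as well, so $[g]^{[h]}\le{}^{n+m}[f]\in S$. Now Theorem~\ref{cut-thm4} applies and yields a weakly Ramsey ultrafilter $\mathcal{V}$ on $\omega$ together with a map $p$ with $p(\mathcal{V})=\mathcal{U}$ such that $(S,L)$ is the cut associated to $p$ and $\mathcal{V}$. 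Transporting $\mathcal{V}$ along a bijection $h:\omega\to[T_{1}]$ produces a weakly Ramsey ultrafilter $\mathcal{V}'=h(\mathcal{V})$ on $[T_{1}]$, and by Remark~\ref{IsomorphismRemark} the cut associated to $p\circ h^{-1}$ and $\mathcal{V}'$ is still $(S,L)$; since $(S,L)$ is not the standard cut, Theorem~\ref{MainResult1} gives that $\mathcal{V}'$ is not Ramsey for $\mathcal{R}_{1}$.

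The only genuine computation is the closure of $S$ under exponentiation, and that is immediate from the inequality on iterated exponentials recorded just above the corollary; everything else is a line-for-line adaptation of the previous proof. So I do not anticipate a real obstacle — the one point deserving a moment's care is simply that $({}^{n}k)^{{}^{m}k}\le{}^{n+m}k$ transfers from $\omega$ to the nonstandard $[f]$ because it is a first-order statement true of every natural number, which is precisely the kind of transfer principle the paper already invokes in the model-theoretic discussion.
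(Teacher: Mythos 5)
Your proposal is correct and follows essentially the same route as the paper: the same cut $S=\bigcup_{n<\omega}\{[g]:[g]\le{}^{n}[f]\}$ over a selective ultrafilter, closure under exponentiation via $({}^{n}k)^{{}^{m}k}\le{}^{n+m}k$, countable saturation for the lower-bound condition, Theorem~\ref{cut-thm4} to produce the weakly Ramsey ultrafilter, and Theorem~\ref{MainResult1} to rule out Ramsey for $\mathcal{R}_{1}$. Your explicit transfer of $\mathcal{V}$ to the base set $[T_{1}]$ via a bijection and Remark~\ref{IsomorphismRemark} is a small bookkeeping step the paper leaves implicit, and it is handled correctly.
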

\begin{proof}
Recall that under CH, selective ultrafilters exist. Let $\mathcal{U}$ be a selective ultrafilter on $\omega$, $[f]$ be a nonstandard element of $\omega^{\omega}/\mathcal{U}$ and  
\begin{equation}
S = \bigcup_{n<\omega} \{ [g] \in \omega^{\omega}/\mathcal{U}:  [g]\le {}^{n}[f]\}.
\end{equation}
If $[g]$ and $[h]$ are in $S$ then there exists natural numbers $n$ and $m$ such that $[g]\le {}^{n}[f]$ and $[h]\le {}^{m}[f]$. So $[g]^{[h]} \le {}^{n+m}[f]$. Therefore $S$ is closed under exponentiation. Additionally $S$ contains the standard part of $\omega^{\omega}/\mathcal{U}$. Let $L$ be the complement of $S$. The condition that $L$ is nonempty and every countable subset of $L$ has a lower bound in L, will be automatically satisfied if $S$ has a countable cofinal subset, because $\omega^{\omega}/\mathcal{U}$ is countably saturated (see \cite{KeislerSat}). So $(S,L)$ is a proper cut such that $S$ is closed under exponentiation and every countable subset of $L$ has a lower bound in $L$.  By Theorem $\ref{cut-thm4}$ there is a weakly Ramsey ultrafilter $\mathcal{V}$ and a Rudin-Keisler map $p$ such that $p(\mathcal{V})=\mathcal{U}$ and $(S,L)$ is the cut associated to $p$ and $\mathcal{V}$. Since $(S,L)$ is not the standard cut Theorem $\ref{MainResult1}$ implies that $\mathcal{V}$ is not Ramsey for $\mathcal{R}_{1}$.
\end{proof}
In \cite{SelectiveButNotRamsey} Trujillo has shown that assuming CH there exists a selective for $\mathcal{R}_{1}$ ultrafilter which is not Ramsey for $\mathcal{R}_{1}$. Using a similar proof to that in Theorem $\ref{MainResult1}$ it is possible to characterize the cuts obtainable from a selective for $\mathcal{R}_{1}$ ultrafilter and the map $\pi$; they are exactly the standard cuts. However, it is unclear if there is a selective for $\mathcal{R}_{1}$ ultrafilter $\mathcal{U}$ and a Rudin-Keisler map $p$ such that the cut associated to $p$ and $\mathcal{U}$ is not standard. This motivates the following question:
\begin{question}
Can the notions of selective for $\mathcal{R}_{1}$ and Ramsey for $\mathcal{R}_{1}$ be distinguished by characterizing the Dedekind cuts obtainable from selective for $\mathcal{R}_{1}$ ultrafilters on $[T_{1}]$?
\end{question}
If it is shown that the only cuts obtainable are standard then this method will not distinguish between the two notions. However if there exists a selective for $\mathcal{R}_{1}$ ultrafilter and a Rudin-Keisler map $p$ such that $(S,L)$ is not standard then $\mathcal{U}$ will not be Ramsey for $\mathcal{R}_{1}$ and the two notions will be distinguished. 

Dobrinen and Todorcevic in \cite{Ramsey-Class2} have defined generalizations of the space $\mathcal{R}_{1}$ for $1< \alpha<\omega_{1}$. The spaces are built from trees $T_{\alpha}$ in much the same way that $\mathcal{R}_{1}$ is built from $T_{1}$. Trujillo \cite{SelectiveButNotRamsey} has shown that for each positive integer $n$, under CH, there are selective for $\mathcal{R}_{n}$ ultrafilters on $[T_{n}]$ which are not Ramsey for $\mathcal{R}_{n}$. However it is still unknown if, under CH, there are selective for $\mathcal{R}_{\alpha}$ ultrafilters on $[T_{\alpha}]$ which are not Ramsey for $\mathcal{R}_{\alpha}$ for $\omega\le \alpha <\omega_{1}$. The methods used in \cite{SelectiveButNotRamsey} fail for infinite $\alpha$ as the trees $T_{\alpha}$ have infinite height.

\begin{question} Assume that $1<\alpha < \omega_{1}$.
Can the notions of selective for $\mathcal{R}_{\alpha}$ and Ramsey for $\mathcal{R}_{\alpha}$ be distinguished by characterizing the Dedekind cuts obtainable from selective and Ramsey for $\mathcal{R}_{\alpha}$ ultrafilters on $[T_{\alpha}]$?
\end{question}

We may also prove similar existence results using the second main result, Theorem $\ref{MainResult2}$.
\begin{corollary}
Assume CH holds and $\mathcal{U}_{1}$ is a Ramsey for $\mathcal{R}_{1}$ ultrafilter on $[T_{1}]$ generated by a generic subset of $(\mathcal{R}_{1}, \le^{*})$. There exists a weakly Ramsey ultrafilter $\mathcal{V}$ on $[T_{1}]$ such that $\mathcal{V}\not \le_{T} \mathcal{U}_{1}.$
\end{corollary}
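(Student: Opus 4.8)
The plan is to repeat the pattern of the preceding two corollaries --- produce a proper cut that is not standard, realize it by a map of an ultrafilter of the prescribed kind, and extract the conclusion --- but now feeding the outcome into Theorem~\ref{MainResult2} together with the Dobrinen--Todorcevic description of the ultrafilters Tukey-below $\mathcal{U}_{1}$. First I would take over, essentially verbatim, the construction in the proof of the preceding corollary: under CH fix a selective ultrafilter $\mathcal{U}$ on $\omega$ and a nonstandard $[f]\in\omega^{\omega}/\mathcal{U}$, put $S=\bigcup_{n<\omega}\{[g]\in\omega^{\omega}/\mathcal{U}:[g]\le{}^{n}[f]\}$, and let $L$ be the complement of $S$. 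As shown there, $S$ is closed under exponentiation and is countably cofinal, so $L$ is nonempty and every countable subset of $L$ has a lower bound in $L$ (countable saturation, \cite{KeislerSat}); and $(S,L)$ is \emph{not} the standard cut, since $S$ contains the nonstandard class $[f]$. By Theorem~\ref{cut-thm4} there are then a weakly Ramsey ultrafilter $\mathcal{V}$, which we may take to live on $[T_{1}]$ after transferring along a bijection, and a map $p$ with $p(\mathcal{V})=\mathcal{U}$ such that $(S,L)$ is the cut associated to $p$ and $\mathcal{V}$. I claim this $\mathcal{V}$ witnesses the corollary.

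To prove $\mathcal{V}\not\le_{T}\mathcal{U}_{1}$, suppose toward a contradiction that $\mathcal{V}\le_{T}\mathcal{U}_{1}$. Since $\mathcal{V}$ is weakly Ramsey it is a p-point (\cite{BlassCut}), and since $(S,L)$ is a proper cut associated to $p$ and $\mathcal{V}$, the map $p$ is one-to-one on no member of $\mathcal{V}$; hence $\mathcal{V}$ is neither principal nor selective, because over a selective ultrafilter every finite-to-one map is one-to-one on one of its members and so no proper cut can be associated to it. Now I would invoke the Dobrinen--Todorcevic analysis of the structure below $\mathcal{U}_{1}$. Because $\mathcal{U}_{1}$ is basically generated (\cite{Ramsey-Class}, \cite{TukeyTypesDT}), every p-point that is Tukey-reducible to $\mathcal{U}_{1}$ is already Rudin--Keisler reducible to $\mathcal{U}_{1}$; and by the Ramsey-classification theorem for $\mathcal{R}_{1}$ (\cite{Ramsey-Class}, Theorem~4.25 and its consequences on canonical equivalence relations --- the same ingredient that drives the case analysis in the proof of Theorem~\ref{MainResult2}) every Rudin--Keisler image of $\mathcal{U}_{1}$ is isomorphic to $\mathcal{U}_{1}$ itself, to $\pi(\mathcal{U}_{1})$, or to a principal ultrafilter. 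The principal case is impossible, and $\mathcal{V}\cong\pi(\mathcal{U}_{1})$ is impossible because $\pi(\mathcal{U}_{1})$ is selective (Theorem~\ref{selectThm}, as $\mathcal{U}_{1}$ is Ramsey, hence selective, for $\mathcal{R}_{1}$ by Theorem~\ref{EquivalentRamsey}) whereas $\mathcal{V}$ is not. Thus $\mathcal{V}\cong\mathcal{U}_{1}$, so by Remark~\ref{IsomorphismRemark} we may assume $\mathcal{V}$ is itself a p-point in the Tukey type of $\mathcal{U}_{1}$. But then Theorem~\ref{MainResult2}, applied to $\mathcal{V}$ and $p$, forces the cut associated to $p$ and $\mathcal{V}$ to be the standard cut, contradicting that $(S,L)$ is not standard. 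Hence $\mathcal{V}\not\le_{T}\mathcal{U}_{1}$, which proves the corollary.

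The only genuinely non-routine point is the structural step in the second paragraph: that a p-point Tukey-reducible to $\mathcal{U}_{1}$ is Rudin--Keisler reducible to $\mathcal{U}_{1}$ (this uses that $\mathcal{U}_{1}$ is basically generated), and that $\mathcal{U}_{1}$, $\pi(\mathcal{U}_{1})$ and the principal ultrafilters exhaust the Rudin--Keisler images of $\mathcal{U}_{1}$ (this is a consequence of the Ramsey-classification theorem for $\mathcal{R}_{1}$, in the same spirit as the reduction to a map $g:\mathcal{R}_{1}(j)\to\mathcal{R}_{1}(i)$ performed in the proof of Theorem~\ref{MainResult2}). Locating the precise references for these two facts in \cite{Ramsey-Class} and \cite{TukeyTypesDT} is the main chore; with them in hand, everything else is a direct reuse of Theorem~\ref{cut-thm4}, the cut built for the previous corollary, Remark~\ref{IsomorphismRemark}, and Theorem~\ref{MainResult2}.
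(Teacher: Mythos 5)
Your overall architecture matches the paper's: build a proper, exponentiation-closed, nonstandard cut, realize it by Theorem~\ref{cut-thm4} as the cut of some map $p$ of a weakly Ramsey ultrafilter $\mathcal{V}$, and derive a contradiction with Theorem~\ref{MainResult2} under the assumption $\mathcal{V}\le_{T}\mathcal{U}_{1}$. The gap is in your structural bridge. The claim that every p-point Tukey-reducible to $\mathcal{U}_{1}$ is Rudin--Keisler reducible to $\mathcal{U}_{1}$ is false, and it contradicts the very structure theorem quoted in Section~\ref{section5}: the p-points within the Tukey type of $\mathcal{U}_{1}$ form a strictly increasing chain $\mathcal{Y}_{1}<_{RK}\mathcal{Y}_{2}<_{RK}\cdots$ with $\mathcal{Y}_{1}\cong\mathcal{U}_{1}$ (via the bijection between $\mathcal{R}_{1}(0)$ and $[T_{1}]$), so for instance $\mathcal{Y}_{2}$ is a p-point Tukey-equivalent, hence Tukey-reducible, to $\mathcal{U}_{1}$ that is \emph{not} $\le_{RK}\mathcal{U}_{1}$. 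There is no ``basically generated $\Rightarrow$ Tukey gives RK'' principle in \cite{TukeyTypesDT} or \cite{Ramsey-Class} to appeal to, so your trichotomy ``$\mathcal{V}\cong\mathcal{U}_{1}$, $\cong\pi(\mathcal{U}_{1})$, or principal'' and the ensuing conclusion $\mathcal{V}\cong\mathcal{U}_{1}$ are unjustified. What you actually need --- that a nonprincipal, non-selective p-point Tukey-reducible to $\mathcal{U}_{1}$ is isomorphic to some $\mathcal{Y}_{j+1}$ and therefore lies in the Tukey type of $\mathcal{U}_{1}$ --- is true, but it must be drawn from the classification of Dobrinen and Todorcevic (Theorem~5.10 and Example~5.17 of \cite{Ramsey-Class}, the same ingredient used in the proof of Theorem~\ref{MainResult2}): up to isomorphism the p-points Tukey below $\mathcal{U}_{1}$ are principal, the selective $\pi(\mathcal{U}_{1})$, or one of the $\mathcal{Y}_{i+1}$. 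With that substitution your argument (ruling out the principal and selective cases exactly as you do) goes through.

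The paper takes a shorter route that sidesteps your hand-made identification: instead of an arbitrary CH-selective ultrafilter $\mathcal{U}$, it builds the cut over $\pi(\mathcal{U}_{1})$ itself, which is selective by Theorem~\ref{selectThm}. Taking $[f]$ nonstandard in $\omega^{\omega}/\pi(\mathcal{U}_{1})$ and $S=\bigcup_{n<\omega}\{[g]:[g]\le{}^{n}[f]\}$, Theorem~\ref{cut-thm4} produces a weakly Ramsey $\mathcal{V}$ with $p(\mathcal{V})=\pi(\mathcal{U}_{1})$ whose associated cut is proper and nonstandard, and the nonstandardness is then played off against Theorem~\ref{MainResult2} to conclude $\mathcal{V}\not\le_{T}\mathcal{U}_{1}$. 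Anchoring the image of $p$ at $\pi(\mathcal{U}_{1})$ is what lets Theorem~\ref{MainResult2} be applied directly; your free choice of $\mathcal{U}$ is admissible, but it is precisely what forces the extra structural step that your proposal currently mis-states.
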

\begin{proof}
Notice that $\pi(\mathcal{U}_{1})$ is a selective ultrafilter by Theorem \ref{selectThm}. Let $[f]$ be a nonstandard element of $\omega^{\omega}/\pi(\mathcal{U}_{1})$ and
\begin{equation}
S= \bigcup_{n<\omega} \{ [g] \in \omega^{\omega}/\pi(\mathcal{U}_{1}):  [g]\le {}^{n}[f]\}.
\end{equation}
$S$ is closed under exponentiation. Additionally $S$ contain the standard part of $\omega^{\omega}/\pi(\mathcal{U})$. Let $L$ be the complement of $S$. The condition that $L$ is nonempty and every countable subset of $L$ has a lower bound in $L$, is automatically satisfied if $S$ has a countable cofinal subset, because $\omega^{\omega}/\pi(\mathcal{U}_{1})$ is countably saturated (see \cite{KeislerSat}). So $(S,L)$ is a proper cut such that $S$ is closed under exponentiation and every countable subset of $L$ has a lower bound in $L$.  By Theorem $\ref{cut-thm4}$ there is a weakly Ramsey ultrafilter $\mathcal{V}$ and a Rudin-Keisler map $p$ such that $p(\mathcal{V})=\pi(\mathcal{U}_{1})$ and $(S,L)$ is the cut associated to $p$ and $\mathcal{V}$. Since $(S,L)$ is not the standard cut Theorem $\ref{MainResult2}$ implies that $\mathcal{V}$ is not Tukey-reducible to $\mathcal{U}_{1}$.
\end{proof}

\begin{corollary}
Assume CH holds and $\mathcal{U}_{1}$ is a Ramsey for $\mathcal{R}_{1}$ ultrafilter on $[T_{1}]$ generated by a generic subset of $(\mathcal{R}_{1}, \le^{*})$. There  exists a p-point ultrafilter $\mathcal{W}$ on $[T_{1}]$ which is not weakly Ramsey such that $\mathcal{W}>_{T} \mathcal{U}_{1}$.
\end{corollary}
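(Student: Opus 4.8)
The plan is to run the same kind of construction used in the previous corollaries of this section, but this time taking the target ultrafilter of Blass's theorem to be $\mathcal{U}_{1}$ itself; this is legitimate because $\mathcal{U}_{1}$ is a p-point on the countable set $[T_{1}]$, so $\omega^{[T_{1}]}/\mathcal{U}_{1}$ is countably saturated (see \cite{KeislerSat}). First I would fix a nonstandard element $[f]\in\omega^{[T_{1}]}/\mathcal{U}_{1}$ and set
\[
S=\bigcup_{n<\omega}\{[g]\in\omega^{[T_{1}]}/\mathcal{U}_{1}:[g]\le n\cdot[f]\},
\]
with $L$ its complement. Then $S$ is closed under addition, contains the standard part, is \emph{not} closed under exponentiation, and has $\{n\cdot[f]:n<\omega\}$ as a countable cofinal subset; countable saturation then makes $(S,L)$ a proper cut with $L$ nonempty and every countable subset of $L$ bounded below in $L$, while $(S,L)$ is not the standard cut since $[f]\in S$ is nonstandard.

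Next, Theorem~\ref{cut-thm2} applies, since $\mathcal{U}_{1}$ is a p-point, $S$ is closed under addition, and every countable subset of $L$ has a lower bound in $L$. It yields a p-point ultrafilter $\mathcal{W}$ together with a Rudin--Keisler map $p$ such that $p(\mathcal{W})=\mathcal{U}_{1}$ and $(S,L)$ is the cut associated to $p$ and $\mathcal{W}$; transporting along a bijection and invoking Remark~\ref{IsomorphismRemark} we may assume $\mathcal{W}$ lives on $[T_{1}]$ without altering the associated cut or the Tukey and Rudin--Keisler relations. From $p(\mathcal{W})=\mathcal{U}_{1}$ we get $\mathcal{U}_{1}\le_{RK}\mathcal{W}$, hence $\mathcal{U}_{1}\le_{T}\mathcal{W}$. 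That $\mathcal{W}$ is not weakly Ramsey follows exactly as in the previous corollaries: were $\mathcal{W}$ weakly Ramsey, Theorem~\ref{cut-thm4} applied to the map $p$ would force $S$ to be closed under exponentiation, which it is not.

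Finally I would show $\mathcal{W}\not\le_{T}\mathcal{U}_{1}$, which together with $\mathcal{U}_{1}\le_{T}\mathcal{W}$ gives $\mathcal{W}>_{T}\mathcal{U}_{1}$. Suppose toward a contradiction that $\mathcal{W}\le_{T}\mathcal{U}_{1}$; then $\mathcal{W}\equiv_{T}\mathcal{U}_{1}$, so $\mathcal{W}$ is a p-point ultrafilter lying in the Tukey type of $\mathcal{U}_{1}$, and Theorem~\ref{MainResult2} applied to the map $p$ from $\mathcal{W}$ to $\mathcal{U}_{1}$ forces $(S,L)$ to be the standard cut, contradicting the choice of $[f]$. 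I do not anticipate a real obstacle: the proof is a recombination of tools already developed, and the only points that need care are checking that $\mathcal{U}_{1}$ is an admissible p-point target for Blass's Theorem~\ref{cut-thm2} and that the change of base set effected through Remark~\ref{IsomorphismRemark} preserves the associated cut together with the Tukey and Rudin--Keisler structure.
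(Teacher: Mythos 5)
Your proof is correct and follows essentially the same route as the paper: build the additively closed, non-exponentially closed cut over a p-point target Tukey-equivalent to $\mathcal{U}_{1}$, invoke Theorem~\ref{cut-thm2} to realize it from a p-point $\mathcal{W}$, and use Theorem~\ref{MainResult2} to rule out $\mathcal{W}\le_{T}\mathcal{U}_{1}$. The only (harmless) differences are that you take the target to be $\mathcal{U}_{1}$ itself rather than $\pi_{T(1)}(\mathcal{U}_{1})$, and that you spell out the ``not weakly Ramsey'' step via Theorem~\ref{cut-thm4}, which the paper's own proof leaves implicit.
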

\begin{proof}
Notice that $\pi_{T(1)}(\mathcal{U}_{1})$ is a p-point ultrafilter by Theorem \ref{selectThm}. Let $[f]$ be a nonstandard element of $\omega^{\omega}/\pi_{T(1)}(\mathcal{U}_{1})$ and
\begin{equation}
S = \bigcup_{n<\omega} \{ [g] \in \omega^{\omega}/\pi_{T(1)}(\mathcal{U}_{1}):  [g]\le n\cdot[f]\} \mbox{ and }
\end{equation}
$S$ is under addition. Additionally $S$ contain the standard part of $\omega^{\omega}/\pi_{T(1)}(\mathcal{U})$. Let $L$ be the complement of $S$. The condition that $L$ is nonempty and every countable subset of $L$ has a lower bound in $L$, is automatically satisfied if $S$ has a countable cofinal subset, because $\omega^{\omega}/\pi_{T(1)}(\mathcal{U}_{1})$ is countably saturated (see \cite{KeislerSat}). So $(S,L)$ is a proper cut such that $S$ is closed under addition and every countable subset of $L$ has a lower bound in $L$.  By Theorem $\ref{cut-thm2}$ there is a p-point ultrafilter $\mathcal{W}$ and a Rudin-Keisler map $p$ such that $p(\mathcal{W})=\pi_{T(1)}(\mathcal{U}_{1})$ and $(S,L)$ is the cut associated to $p$ and $\mathcal{W}$. Since $(S,L)$ is not the standard cut Theorem $\ref{MainResult2}$ implies that $\mathcal{W}$ is not Tukey-reducible to $\mathcal{U}_{1}$. On the other hand, $\pi_{T(1)}(\mathcal{U}_{1})$ is Tukey equivalent to $\mathcal{U}_{1}$. Therefore $\mathcal{W}>_{T} \mathcal{V}$.
\end{proof}
\begin{remark}
Notice that the previous corollary shows that the converse of Theorem $\ref{MainResult2}$, under CH, does not hold. In particular there is a p-point $\mathcal{W}$ such that there is no p-point in the Tukey-type of $\mathcal{U}_{1}$ which gives the standard cut in $\omega^{\omega}/\mathcal{W}$ since otherwise $\mathcal{W}$ would be Tukey reducible to $\mathcal{U}_{1}$.
\end{remark}
This leads naturally to the next question.
\begin{question}
Is it possible to strengthen the conclusion of Theorem $\ref{MainResult2}$ so that, under CH, a converse to the modified theorem holds?
\end{question}
The previous remark shows that at the very least one needs to assume that the not only is $\mathcal{V}$ a p-point but it is also Tukey-reducible to $\mathcal{U}_{1}$. We conclude this paper with a more general question of Dobrinen from \cite{SelectiveButNotRamsey} which has motivated much of the work in this paper. 

\begin{question} For a given topological Ramsey space $\mathcal{R}$, are the notions of selective for $\mathcal{R}$ and Ramsey for $\mathcal{R}$ equivalent?
\end{question}

\end{document}